\newtheorem{Def}{Definition}
\newtheorem{theorem}[Def]{Theorem}
\newtheorem{lemma}[Def]{Lemma}
\newtheorem{remark}[Def]{Remark}
\newtheorem*{maintheorem}{Main Theorem}
\newtheorem*{question}{Question}
\newcolumntype{d}[1]{D{.}{\cdot}{#1}}
\renewcommand{\@cite}[2]{{#1\if@tempswa , #2\fi}}
\title{Generating the mapping class group by torsion elements of small order.}
\author{Naoyuki Monden}
\date{}
\begin{document}
\maketitle
\begin{abstract}
We show that the mapping class group of a closed oriented surface of genus at least three is generated by 3 elements of order 3 and by 4 elements of order 4.
Note that the mapping class group cannot be generated by finitely many torsion elements of same order if genus is equal to one or two.
\end{abstract}

\section{Introduction}
Let $\Sigma_{g}$ denote a closed, oriented surface of genus $g$, and let ${\cal M}_{g}$ denote its mapping class group, which is the group of homotopy classes of orientation-preserving homeomorphisms.

The study of generators of ${\cal M}_{g}$ was pioneered by Dehn.
Dehn [De] proved that ${\cal M}_{g}$ is generated by a finite set of Dehn twists.
Lickorish [Li] showed that $3g-1$ Dehn twists generate ${\cal M}_{g}$.
This number was improved to $2g+1$ by Humphries [Hu].
Humphries proved, moreover, that in fact the number $2g+1$ is minimal;
i.e. ${\cal M}_{g}$ cannot be generated by $2g$ (or less) Dehn twists.

It is classical problem to find small generating sets and torsion generating sets for ${\cal M}_{g}$.
Maclachlan [Ma] proved that the moduli space is simply connected as a topological space by showing that ${\cal M}_{g}$ is generated by torsion elements.
McCarthy and Papadopoulos [MP] proved that ${\cal M}_{g}$ is generated by infinitely many conjugates of a single involution for $g\geq 3$.
Luo [Luo] discovered a first finite set of involutions which generate ${\cal M}_{g}$ for $g\geq 3$.
Luo posed the question of whether there is a universal upper bound, independent of $g$, for the number of torsion elements needs to generate ${\cal M}_{g}$.
Brendle and Farb answered Luo's question.
Brendle and Farb [BF] proved that ${\cal M}_{g}$ is generated by 3 elements of order $2g+2$, $4g+2$ and $2$ (or $g$).
More, Korkmaz [Ko] showed that ${\cal M}_{g}$ is generated by 2 torsion elements, each of order $4g+2$.
Brendle and Farb [BF] also constructed a generating set of ${\cal M}_{g}$ for $g\geq 3$ consisting of 6 involutions.
Kassabov [Ka] improved their method to show that ${\cal M}_{g}$ is generated by 4 involutions if $g\geq 7$, 5 involutions if $g\geq 5$ and 6 involutions if $g\geq 3$.

For all $g\geq 1$, the elements of order 2, 3 and 4 are in ${\cal M}_{g}$.
In the present paper, we construct a generating set of ${\cal M}_{g}$ consisting of elements of small order($>2$).
\begin{maintheorem}
For $g\geq 3$, ${\cal M}_{g}$ can be generated by 3 elements of order 3 and by 4 elements of order 4.
\end{maintheorem}

\section{Preliminaries}
Let $c$ be a simple closed curve on $\Sigma_{g}$. 
Then the (right hand) Dehn twist $T_{c}$ about $c$ is the homotopy class of the homeomorphism obtained
by cutting $\Sigma_{g}$ along $c$, twisting one of the side by $360^{\circ}$ to the right and gluing two sides of $c$ back to each other. Figure~\ref{fig1} shows the Dehn twist about the curve $c$. 
\begin{figure}[htbp]
 \begin{center}
  \includegraphics*[width=8cm]{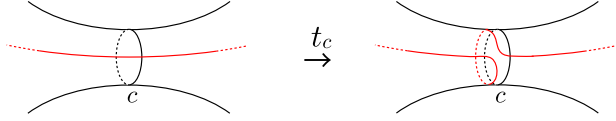}
 \end{center}
 \caption{The Dehn twist}
 \label{fig1}
\end{figure}
We will denote by $T_{c}$ the Dehn twist about the curve $c$. 

We recall the following lemmas and theorems.
\begin{lemma}\label{lem1}
For any homeomorphism $h$ of the surface $\Sigma_{g}$, the Dehn twists around the curves $c$ and $h(c)$ are conjugate in the mapping class group ${\cal M}_{g}$,
\begin{eqnarray*}
T_{h(\textit{c})}=hT_{\textit{c}}h^{-1}.
\end{eqnarray*}
\end{lemma}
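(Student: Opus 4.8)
The plan is to argue directly from the geometric definition of the Dehn twist as a homeomorphism supported in an annulus. First I would fix an orientation-preserving embedding $\iota\colon S^{1}\times[0,1]\hookrightarrow\Sigma_{g}$ whose image is a closed annular neighborhood $A$ of $c$, and let $\phi$ be the homeomorphism of $\Sigma_{g}$ that is the identity on $\Sigma_{g}\setminus A$ and, in the coordinates given by $\iota$, is the standard right-handed twist $(\theta,t)\mapsto(\theta+2\pi t,t)$. By definition $[\phi]=T_{c}$ in ${\cal M}_{g}$.

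Next I would analyze the conjugate $h\phi h^{-1}$. Since $\phi$ is the identity outside $A$, for every point $x\notin h(A)$ we have $h^{-1}(x)\notin A$, hence $\phi(h^{-1}(x))=h^{-1}(x)$ and therefore $h\phi h^{-1}(x)=x$; thus $h\phi h^{-1}$ is supported in $h(A)$. Now $h\circ\iota\colon S^{1}\times[0,1]\hookrightarrow\Sigma_{g}$ is again an embedding, it is orientation-preserving because $h$ is, and its image $h(A)$ is a closed annular neighborhood of $h(c)$. In the coordinates provided by $h\circ\iota$ the map $h\phi h^{-1}$ is exactly $(\theta,t)\mapsto(\theta+2\pi t,t)$, i.e. the standard right-handed twist, so $h\phi h^{-1}$ is a representative of the right Dehn twist about $h(c)$ and $[h\phi h^{-1}]=T_{h(c)}$.

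Finally, since $[h\phi h^{-1}]=[h][\phi][h]^{-1}=hT_{c}h^{-1}$ in ${\cal M}_{g}$, combining the two computations yields $T_{h(c)}=hT_{c}h^{-1}$, as claimed.

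The one place I would be careful is the ``right-handed'' part: the twisting direction is defined using the orientation of $\Sigma_{g}$, and it is preserved precisely because $h$ is orientation-preserving (an orientation-reversing $h$ would conjugate $T_{c}$ to $T_{h(c)}^{-1}$ instead). One should also note, though it is routine, that $T_{c}$ depends only on the isotopy class of $c$ and not on the chosen annular neighborhood or its parametrization, so that the identity is well posed; this follows from the change-of-coordinates principle for annuli on a surface.
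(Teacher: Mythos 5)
Your proof is correct, and it is the standard argument: conjugating the annulus-supported twist representative by $h$ gives a twist supported in the annulus $h(A)$ about $h(c)$, with orientation-preservation of $h$ guaranteeing the handedness is unchanged. The paper itself offers no proof of Lemma~\ref{lem1} (it is simply recalled as a known fact), so there is nothing further to compare; your argument, including the remarks on well-definedness and on what fails for orientation-reversing $h$, is exactly the expected justification.
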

\begin{lemma}\label{lem2}
Let $c$ and $d$ be two simple closed curves on $\Sigma_{g}$. If $c$ is disjoint from $d$, then 
\begin{eqnarray*}
T_{c}T_{d}=T_{d}T_{c}
\end{eqnarray*}
\end{lemma}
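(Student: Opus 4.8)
The plan is to realize both Dehn twists by homeomorphisms whose supports are disjoint, and then observe that homeomorphisms with disjoint supports commute on the nose, not merely up to isotopy. The starting point is the hypothesis that $c$ and $d$ are disjoint simple closed curves. Because a simple closed curve on a surface has an annular regular neighborhood, and because $c$ and $d$ are disjoint, I would first choose annular neighborhoods $A_{c}$ of $c$ and $A_{d}$ of $d$ that are themselves disjoint, $A_{c}\cap A_{d}=\emptyset$. This is the geometric crux: disjointness of the curves passes to disjointness of suitably small tubular neighborhoods.

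Next I would invoke the local model of a Dehn twist from the definition in the excerpt: the twist $T_{c}$ is represented by a homeomorphism $\phi_{c}$ that performs the $360^{\circ}$ shearing inside $A_{c}$ and is the identity on $\Sigma_{g}\setminus A_{c}$. Likewise $T_{d}$ is represented by $\phi_{d}$ supported in $A_{d}$. Thus $\phi_{c}$ is the identity wherever $\phi_{d}$ is not, and vice versa.

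With these representatives in hand, I would verify the pointwise identity $\phi_{c}\circ\phi_{d}=\phi_{d}\circ\phi_{c}$ by checking it separately on the three regions $A_{c}$, $A_{d}$, and the complement $\Sigma_{g}\setminus(A_{c}\cup A_{d})$: on $A_{c}$ the map $\phi_{d}$ is the identity, so both composites equal $\phi_{c}$; on $A_{d}$ the map $\phi_{c}$ is the identity, so both equal $\phi_{d}$; and on the complement both maps are the identity. Hence the two homeomorphisms are literally equal, not just isotopic. Passing to homotopy classes then gives $T_{c}T_{d}=T_{d}T_{c}$ in ${\cal M}_{g}$, since composition of mapping classes is induced by composition of representatives.

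I do not expect a serious obstacle here; the argument is essentially a support-disjointness observation. The only point requiring care is the very first step, ensuring that the annular neighborhoods can be chosen disjoint. I would justify this by shrinking the neighborhoods: since $c$ and $d$ are compact and disjoint, their distance (in any fixed metric) is positive, so annuli of sufficiently small radius about each curve are disjoint, and each such annulus still serves as a regular neighborhood carrying the twist. Once disjoint supports are secured, everything else is immediate.
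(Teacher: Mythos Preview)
Your argument is correct and is exactly the standard proof of this commutation lemma: realize each twist by a homeomorphism supported in an annular neighborhood of its curve, choose those annuli disjoint (possible since $c\cap d=\emptyset$), and note that maps with disjoint supports commute pointwise. The paper itself does not supply a proof of Lemma~\ref{lem2}; it is listed in the preliminaries as a recalled fact, so there is nothing to compare against beyond observing that your proof is the expected one.
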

\begin{lemma}\label{lem3}
If the geometric intersection number of $c$ and $d$ is one, then
\begin{eqnarray*}
T_{c}T_{d}T_{c}=T_{d}T_{c}T_{d}
\end{eqnarray*}
\end{lemma}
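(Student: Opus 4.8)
The plan is to deduce the braid relation from the conjugation formula of Lemma~\ref{lem1} together with a single geometric fact about the two curves. First I would apply Lemma~\ref{lem1} with the homeomorphism $h = T_c$ to the curve $d$, which gives
\begin{eqnarray*}
T_{T_c(d)} = T_c\, T_d\, T_c^{-1}.
\end{eqnarray*}
Thus the whole identity will follow once I can rewrite the left-hand curve-twist $T_{T_c(d)}$ in a second way, again via Lemma~\ref{lem1}.

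The key geometric claim is that, when the geometric intersection number $i(c,d)=1$, the curves $T_c(d)$ and $T_d^{-1}(c)$ are isotopic as unoriented simple closed curves. To verify this I would pass to a regular neighborhood $N$ of $c\cup d$; because $c$ and $d$ meet transversely in a single point, $N$ is a torus with one boundary component. On such a one-holed torus every essential simple closed curve is determined up to isotopy by its primitive homology class taken up to sign, so the claim reduces to the elementary Picard--Lefschetz computation $T_c(d) = d \pm c$ and $T_d^{-1}(c) = c \pm d$ for the action of a Dehn twist on $H_1(N)$; these two classes agree up to sign and hence represent the same unoriented curve. Alternatively one simply draws $c$ and $d$ in the one-holed torus and checks directly that twisting $d$ about $c$ yields the same curve as the inverse twist of $c$ about $d$.

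Granting the claim, Lemma~\ref{lem1} applied with $h = T_d^{-1}$ gives
\begin{eqnarray*}
T_{T_c(d)} = T_{T_d^{-1}(c)} = T_d^{-1}\, T_c\, T_d.
\end{eqnarray*}
Comparing this with the first display yields $T_c\, T_d\, T_c^{-1} = T_d^{-1}\, T_c\, T_d$. Multiplying on the left by $T_d$ and on the right by $T_c$ then produces $T_d\, T_c\, T_d = T_c\, T_d\, T_c$, which is exactly the asserted relation. The only substantive ingredient is the curve identity $T_c(d) = T_d^{-1}(c)$; once that picture in the one-holed torus is pinned down, everything else is formal manipulation with Lemma~\ref{lem1}. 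I expect this identification of the two twisted curves to be the main obstacle, since it demands care with the orientation conventions and with the handedness (right versus left) of the twists, which is precisely where the sign in $d\pm c$ must be reconciled with the inverse twist on the other side.
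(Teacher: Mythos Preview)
The paper does not actually prove Lemma~\ref{lem3}; it is listed among several standard facts that are simply recalled without argument. Your proposed proof is correct and is essentially the classical one: the identity $T_c(d)=T_d^{-1}(c)$ for curves with $i(c,d)=1$ (equivalently, $T_dT_c(d)=c$) combined with the conjugation formula of Lemma~\ref{lem1} immediately yields the braid relation. Your homological check is also sound once the Picard--Lefschetz signs are tracked: with $\langle c,d\rangle=1$ one has $T_c(d)=d+c$ while $T_d^{-1}(c)=c-\langle d,c\rangle d=c+d$, so the two classes agree on the nose, not merely up to sign. Since the paper offers nothing to compare against, there is no divergence of approach to discuss; you have supplied exactly the proof one would expect.
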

\begin{theorem}[Lickorish]
We denote the curves $\alpha_{i}$, $\beta_{i}$, $\gamma_{i}$ as shown in Figure~\ref{fig2}. Then ${\cal M}_{g}$ is generated by $T_{\alpha_{1}},\cdots,T_{\alpha_{g}}, T_{\beta_{1}},\cdots,T_{\beta_{g}}, T_{\gamma_{1}},\cdots,T_{\gamma_{g-1}}$.
\end{theorem}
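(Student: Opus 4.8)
The plan is to deduce Lickorish's theorem from Dehn's theorem, quoted in the introduction, that ${\cal M}_{g}$ is generated by finitely many Dehn twists. Writing $H$ for the subgroup generated by the $3g-1$ twists $T_{\alpha_i},T_{\beta_i},T_{\gamma_i}$, it then suffices to show that $H$ contains the Dehn twist $T_c$ about \emph{every} simple closed curve $c$. The mechanism is Lemma~\ref{lem1}: if $h\in H$ and $h(c)=\alpha_1$, then $T_{\alpha_1}=hT_ch^{-1}$, so $T_c=h^{-1}T_{\alpha_1}h\in H$. Thus, for nonseparating curves the whole problem reduces to proving that $H$ acts transitively on isotopy classes of nonseparating simple closed curves, after which every nonseparating twist is an $H$-conjugate of $T_{\alpha_1}$ and hence lies in $H$.

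The elementary engine for moving curves is a consequence of the braid relation. If the geometric intersection number $i(a,b)$ equals $1$, then combining Lemma~\ref{lem3} with Lemma~\ref{lem1} gives
\[
T_{(T_aT_b)(a)}=(T_aT_b)\,T_a\,(T_aT_b)^{-1}=T_aT_bT_aT_b^{-1}T_a^{-1}=T_bT_aT_bT_b^{-1}T_a^{-1}=T_b,
\]
so (as a Dehn twist determines its curve for $g\ge2$) the product $T_aT_b$ carries $a$ to $b$. Hence a chain of curves $c=c_0,c_1,\dots,c_k$ with $i(c_{j},c_{j+1})=1$ produces an element of $\langle T_{c_0},\dots,T_{c_k}\rangle$ taking $c$ to $c_k$. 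The Lickorish curves are arranged precisely so that consecutive members of $\alpha_1,\gamma_1,\alpha_2,\gamma_2,\dots,\gamma_{g-1},\alpha_g$ meet once and each $\beta_i$ meets $\alpha_i$ once, giving a connected web along which $H$ can already slide $\alpha_1$ freely.

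The crux is to upgrade this local sliding to full transitivity of $H$, and here lies the main obstacle: an arbitrary chain joining a given curve $c$ to $\alpha_1$ uses twists about intermediate curves that are not \emph{a priori} in $H$, so the sliding argument cannot be applied naively. I would resolve this by an induction on the genus combined with an induction on the geometric intersection number $i(c,\beta_g)$. Using the moves above I would first reduce to the case where $c$ is disjoint from $\beta_g$, and then cut $\Sigma_{g}$ along $\beta_g$; the complement is a connected genus $(g-1)$ surface with two boundary components on which the remaining Lickorish curves restrict to a Lickorish system, so the inductive hypothesis places the relevant twists in $H$. Arranging this reduction so that every twist invoked lies in the subgroup already shown to be contained in $H$ is the delicate point of the whole argument.

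Finally I would dispose of separating curves. Once $H$ is known to contain every nonseparating twist, a twist $T_c$ about a separating curve can be written as a product of nonseparating twists by means of the chain relation (the twist about the boundary of a regular neighbourhood of a chain of curves equals an explicit power of the product of the twists about the chain curves); every factor then lies in $H$ by the previous step, so $T_c\in H$ as well. Combining the two cases shows that $H$ contains all Dehn twists, whence $H={\cal M}_{g}$ by Dehn's theorem.
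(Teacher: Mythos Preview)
The paper does not prove Lickorish's theorem; it is quoted as a preliminary result with a citation to [Li], so there is no argument in the paper to compare your proposal against.

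Your overall plan---show that the subgroup $H$ generated by the $3g-1$ Lickorish twists acts transitively on isotopy classes of nonseparating simple closed curves, conclude via Lemma~\ref{lem1} that every nonseparating twist lies in $H$, then handle separating twists by the chain relation---is essentially the shape of Lickorish's original proof. Two remarks are in order.

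First, a factual slip. In the configuration used in this paper (see for instance the lantern relation $T_{\alpha_1}T_{\gamma_1}T_{\gamma_2}T_{\alpha_3}=T_{\alpha_2}T_{x_1}T_{x_2}$, where the four curves on the left are pairwise disjoint, or the $7$-chains $\alpha_{i+2},\beta_{i+2},\gamma_{i+1},\beta_{i+1},\gamma_i,\beta_i,\alpha'_i$ in Section~4), the curves $\alpha_i$ and $\gamma_j$ are disjoint. The connected ``web'' of Lickorish curves with consecutive intersection number one is $\alpha_1,\beta_1,\gamma_1,\beta_2,\gamma_2,\dots,\gamma_{g-1},\beta_g$, with each remaining $\alpha_i$ hanging off $\beta_i$. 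Your asserted chain $\alpha_1,\gamma_1,\alpha_2,\gamma_2,\dots$ is not a chain.

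Second, and more substantively, you correctly identify the crux---reducing $i(c,\beta_g)$ (or more generally the total intersection of $c$ with the Lickorish system) using only twists about curves already in the system---but you do not carry it out. This step is the entire content of Lickorish's argument: one shows by a local case analysis near two consecutive intersection points that some power of a twist about a system curve strictly lowers the intersection number. The induction-on-genus framing you propose can be made to work, but the intersection-reduction lemma is where all the work lies, and without it the proposal remains a plan rather than a proof.
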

\begin{figure}[htbp]
 \begin{center}
  \includegraphics*[width=7cm]{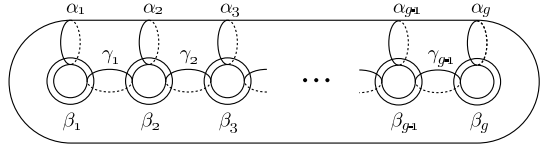}
 \end{center}
 \caption{Lickorish's generators}
 \label{fig2}
\end{figure}
We call $T_{\alpha_{1}},\cdots,T_{\alpha_{g}}, T_{\beta_{1}},\cdots,T_{\beta_{g}}, T_{\gamma_{1}},\cdots,T_{\gamma_{g-1}}$ Lickorish's generators.
\begin{theorem}[Humphries]
${\cal M}_{g}$ is generated by $T_{\alpha_{1}}$, $T_{\alpha_{2}}$, $T_{\beta_{1}},\cdots,T_{\beta_{g}}$, $T_{\gamma_{1}},\cdots,T_{\gamma_{g-1}}$.
\end{theorem}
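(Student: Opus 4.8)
The plan is to deduce Humphries' theorem from Lickorish's theorem, which is already available in the excerpt, by showing that the twists dropped from Lickorish's generating set are redundant. Write $H\le{\cal M}_{g}$ for the subgroup generated by the Humphries twists $T_{\alpha_{1}},T_{\alpha_{2}},T_{\beta_{1}},\ldots,T_{\beta_{g}},T_{\gamma_{1}},\ldots,T_{\gamma_{g-1}}$. Since Lickorish's theorem gives ${\cal M}_{g}=\langle T_{\alpha_{1}},\ldots,T_{\alpha_{g}},T_{\beta_{1}},\ldots,T_{\beta_{g}},T_{\gamma_{1}},\ldots,T_{\gamma_{g-1}}\rangle$, it suffices to prove that $T_{\alpha_{i}}\in H$ for every $i$ with $3\le i\le g$; then $H$ contains all of Lickorish's generators and hence equals ${\cal M}_{g}$.

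The engine of the argument is the lantern relation. On a sphere with four boundary components $\partial_{1},\partial_{2},\partial_{3},\partial_{4}$ carrying three interior curves $x,y,z$ in the standard configuration (each of $x,y,z$ encircling a pair of the boundary holes, pairwise meeting in two points), one has
\[
T_{\partial_{1}}T_{\partial_{2}}T_{\partial_{3}}T_{\partial_{4}}=T_{x}T_{y}T_{z}.
\]
Because the four boundary twists commute with one another, this identity can be solved for the twist about any single one of the seven curves in terms of the other six; for instance $T_{\partial_{1}}=T_{x}T_{y}T_{z}\,T_{\partial_{4}}^{-1}T_{\partial_{3}}^{-1}T_{\partial_{2}}^{-1}$. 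Consequently, whenever we can embed a four-holed sphere in $\Sigma_{g}$ so that the twists about six of its seven distinguished curves already lie in $H$, the seventh twist lies in $H$ as well. I would first state and, if needed, record a proof of this relation, since it is not among the lemmas quoted above.

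I would then argue by induction on $i$. The base cases $i=1,2$ hold because $T_{\alpha_{1}},T_{\alpha_{2}}$ are Humphries generators. For the inductive step, assuming $T_{\alpha_{j}}\in H$ for all $j<i$, I would exhibit an embedded four-holed sphere $R_{i}\subset\Sigma_{g}$ supported on the three consecutive handles carrying $\alpha_{i-2},\alpha_{i-1},\alpha_{i}$, whose seven distinguished curves are, after putting them in standard position, exactly $\alpha_{i}$ together with six curves each of which is either a Humphries generator among the $\beta$'s and $\gamma$'s, a curve $\alpha_{j}$ with $j<i$ (covered by the inductive hypothesis), a subsurface-bounding curve, or the image of one of these under a mapping class already belonging to $H$. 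In the last case I would use Lemma~\ref{lem1} to rewrite the corresponding twist as a conjugate by an element of $H$, keeping it inside $H$. Solving the lantern relation for $T_{\alpha_{i}}$ then places it in $H$, completing the step. Since the induction advances the index by two while the two base indices $1$ and $2$ cover both parities, every $\alpha_{i}$ with $3\le i\le g$ is reached, which also explains structurally why retaining precisely two of the $\alpha$-twists suffices.

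The main obstacle is entirely geometric: producing, for each $i$, an explicit four-holed sphere whose seven curves match the required list, and verifying the curve identifications rigorously. This amounts to checking intersection patterns in Figure~\ref{fig2} and repeatedly invoking Lemmas~\ref{lem1}--\ref{lem3} (conjugation, the disjointness relation, and the braid relation) to recognize that the auxiliary boundary and interior curves of each lantern are carried to Humphries generators by elements of $H$. Care is also needed at the ends of the surface, where the available handles run out and the standard lantern must be relocated or degenerated; handling these extremal indices cleanly is where I expect the bookkeeping to be most delicate.
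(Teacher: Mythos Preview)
The paper does not supply its own proof of Humphries' theorem: it is stated as a background result, attributed to Humphries and referenced as [Hu], with no argument given. So there is no in-paper proof to compare your proposal against.

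As for the proposal itself, your outline is the standard route---derive the extra $T_{\alpha_i}$ from a lantern relation supported near the $i$-th handle and argue by induction---but as written it remains a plan rather than a proof. The substantive step you defer (``I would exhibit an embedded four-holed sphere $R_i$ \ldots'') is exactly where all the content lies: one must name the seven curves explicitly, check that six of them are genuinely images of Humphries curves under words in $H$, and verify the intersection pattern that makes the lantern relation apply. Your remark that the induction ``advances the index by two'' is also not quite right for the usual lantern configuration, which uses two adjacent handles (indices $i-1$ and $i$) rather than three, so each step advances $i$ by one; you should revisit the geometry before committing to that bookkeeping. None of this is fatal, but a referee would not accept the sketch as it stands.
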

We call $T_{\alpha_{1}}, T_{\alpha_{2}}, T_{\beta_{1}},\cdots,T_{\beta_{g}}, T_{\gamma_{1}},\cdots,T_{\gamma_{g-1}}$ Humphries's generators.

\section{Generating the mapping class group by 3 elements of order 3}
In this section we prove that the mapping class group ${\cal M}_{g}$ is generated by 3 elements of order 3.
We assume that $g\geq 3$.

\subsection{Construction of elements of order 3}
We construct two elements of order 3 by cutting and gluing surfaces.
We take the curves $x_{1}$, $x_{2}$ and the separating curve $\delta$ like Figure~\ref{fig a}.
\begin{figure}[htbp]
 \begin{center}
  \includegraphics*[width=7cm]{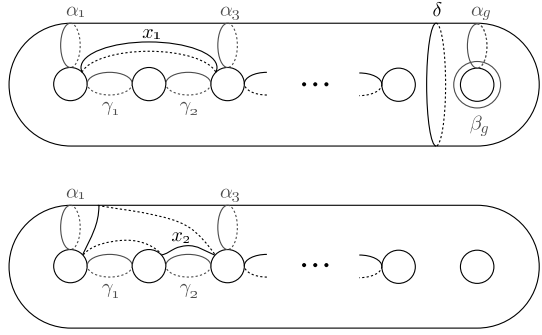}
 \end{center}
 \caption{the curves $x_{1}$, $x_{2}$, $\delta$}
 \label{fig a}
\end{figure}
\subsubsection{Odd genus}
We assume that $g$ is odd.
We construct an element $f$ of order 3.

We cut $\Sigma_{g}$ along the curves $\gamma_{1}, \gamma_{2}, \alpha_{3}$ and $\alpha_{2k}, \gamma_{2k}, \alpha_{2k+1}$ $(k=2,\ldots, \frac{g-1}{2})$ to obtain $\frac{g-1}{2}$ surfaces $S_{1}$, $S_{2}$,\ldots, $S_{\frac{g-1}{2}}$ as shown in Figure~\ref{fig3}.
$S_{1}$ is a sphere with $\frac{3(g+1)}{2}$ boundary components and $S_{k}$ $(k=2,\ldots, \frac{g-1}{2})$ is a pair of pants bounded by $\alpha_{2k}, \gamma_{2k}, \alpha_{2k+1}$.
\begin{figure}[htbp]
 \begin{center}
  \includegraphics*[width=8cm]{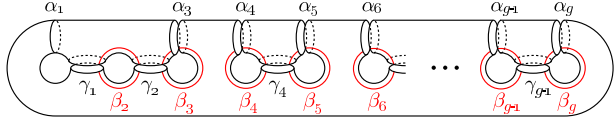}
 \end{center}
 \caption{Cutting the surface}
 \label{fig3}
\end{figure}

We embed $S_{1}$, $S_{2}$,\ldots, $S_{\frac{g-1}{2}}$ in $\mathbb{R}^{3}$ so that they are invariant under $\frac{2\pi}{3}$-rotations $f_{1}$, $f_{2}$,\ldots, $f_{\frac{g-1}{2}}$, respectively (cf. Figure~\ref{fig4}).
\begin{figure}[htbp]
 \begin{center}
  \includegraphics*[width=6cm]{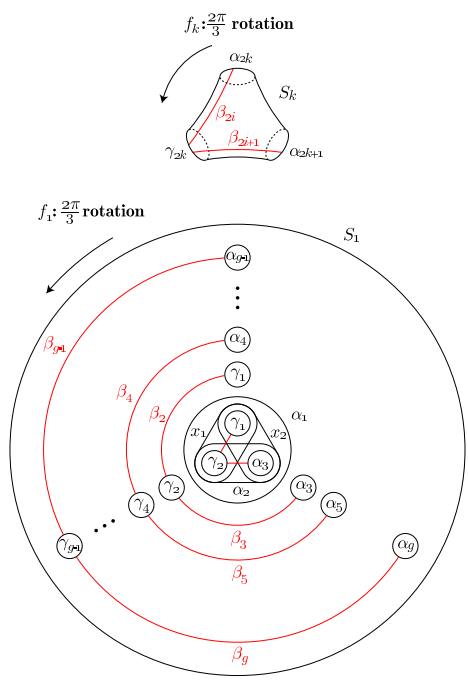}
 \end{center}
 \caption{$\mathbb{Z}_{3}$-symmetry of $\Sigma_{g}$}
 \label{fig4}
\end{figure}

Since the homeomorphisms $f_{1}$, $f_{2}$,\ldots, $f_{\frac{g-1}{2}}$ coincide on the boundaries, they naturally define a homeomorphism $f : \Sigma_{g}\rightarrow \Sigma_{g}$ of order 3.
$f$ acts on the curves on $\Sigma_{g}$ as follows:
\begin{center}
\begin{tabular}{lcr}
$f^{2}(\gamma_{1})=f(\gamma_{2})=\alpha_{3},$& $f(\beta_{2})=\beta_{3},$& $$\\
$f^{2}(\alpha_{2})=f(x_{2})=x_{1},$& $$& $$\\
$f^{2}(\alpha_{2k})=f(\gamma_{2k})=\alpha_{2k+1},$& $f(\beta_{2k})=\beta_{2k+1}$& $(k=2,\ldots, \frac{g-1}{2}).$
\end{tabular}
\end{center}

\

We construct an element $h$ of order 3.

We cut $\Sigma_{g}$ along the curves $\alpha_{2j-1}, \gamma_{2j-1}, \alpha_{2j}$ $(j=1,\ldots, \frac{g-1}{2})$ and $\delta$ to obtain $\frac{g+3}{2}$ surfaces $S_{1}^{\prime}$, $S_{2}^{\prime}$,\ldots, $S_{\frac{g+1}{2}}^{\prime}$, $S_{\frac{g+3}{2}}^{\prime}$ as shown in Figure~\ref{fig5}.
$S_{1}^{\prime}$ is a sphere with $\frac{3g+1}{2}$ boundary components, $S_{j}^{\prime}$ $(j=1,\ldots, \frac{g-1}{2})$ is a pair of pants bounded by $\alpha_{2j-1}, \gamma_{2j-1}, \alpha_{2j}$ and $S_{\frac{g+3}{2}}^{\prime}$ is a torus bounded by $\delta$.
\begin{figure}[htbp]
 \begin{center}
  \includegraphics*[width=8cm]{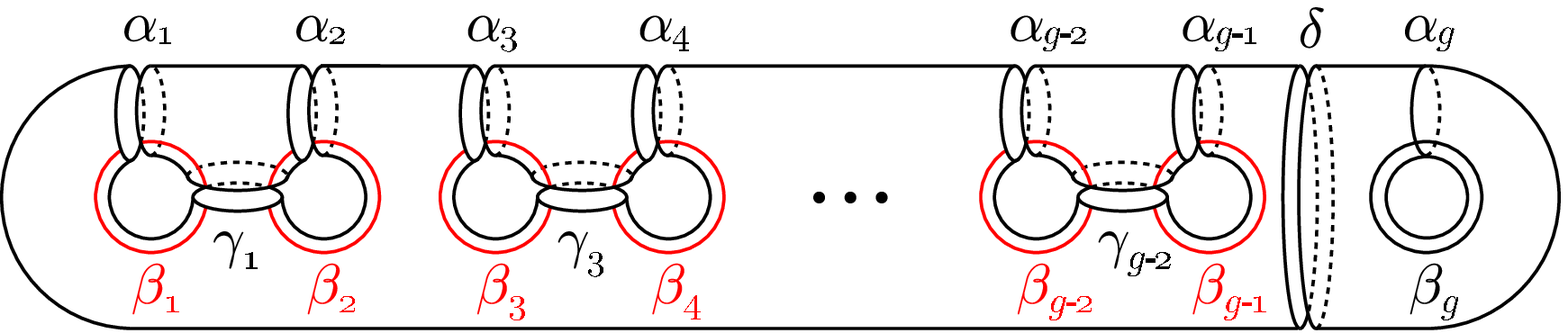}
 \end{center}
 \caption{Cutting the surface, I\hspace{-.1em}I }
 \label{fig5}
\end{figure}

We embed $S_{1}^{\prime}$, $S_{2}^{\prime}$,\ldots, $S_{\frac{g+1}{2}}^{\prime}$ in $\mathbb{R}^{3}$ so that they are invariant under $\frac{2\pi}{3}$-rotations $h_{1}$, $h_{2}$,\ldots, $h_{\frac{g+1}{2}}$, respectively (cf. Figure~\ref{fig6}).
We define that $h_{\frac{g+3}{2}}=(T_{\beta_{g}}T_{\alpha_{g}})^{2}$.
Note that $h_{1}^{3}=T_{\delta}^{-1}$, $h_{\frac{g+3}{2}}^{3}=T_{\delta}$ and $h_{\frac{g+3}{2}}(\alpha_{g})=\beta_{g}$.
\begin{figure}[htbp]
 \begin{center}
  \includegraphics*[width=6cm]{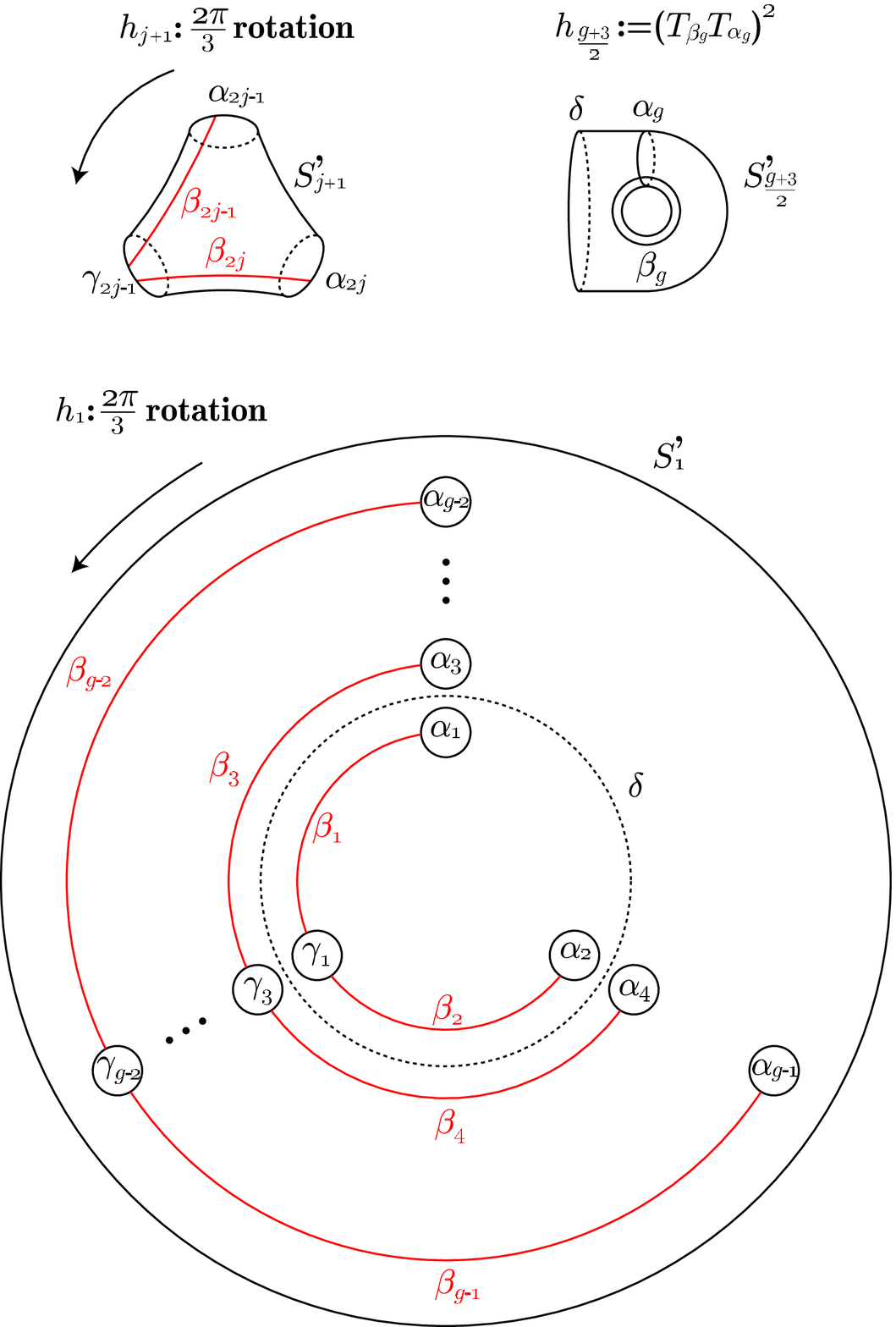}
 \end{center}
 \caption{$\mathbb{Z}_{3}$-symmetry of $\Sigma_{g}$, I\hspace{-.1em}I}
 \label{fig6}
\end{figure}

Since the homeomorphisms $h_{1}$, $h_{2}$,\ldots, $h_{\frac{g+1}{2}}$, $h_{\frac{g+3}{2}}$ coincide on the boundaries, they naturally define a homeomorphism $h:\Sigma_{g}\rightarrow \Sigma_{g}$ of order 3.
$h$ acts on the curves on $\Sigma_{g}$ as follows:
\begin{center}
\begin{tabular}{lcr}
$h(\alpha_{g})=\beta_{g},$& $$& $$\\
$h^{2}(\alpha_{2j-1})=h(\gamma_{2j-1})=\alpha_{2j},$& $h(\beta_{2j-1})=\beta_{2j}$& $(j=1,\ldots, \frac{g-1}{2}).$
\end{tabular}
\end{center}

\subsubsection{Even genus}
We assume that $g$ is even.
By the similar arguments of the case of odd genus we construct $f$ and $h$ which are order 3.

We cut $\Sigma_{g}$ along the curves $\gamma_{1}, \gamma_{2}, \alpha_{3}$, $\delta$ and $\alpha_{2k}, \gamma_{2k}, \alpha_{2k+1}$ $(k=2,\cdots, \frac{g-2}{2})$ to obtain $\frac{g}{2}$ surfaces $S_{1}$, $S_{2}$,\ldots, $S_{\frac{g-2}{2}}$, $S_{\frac{g}{2}}$ as shown in Figure~\ref{fig b}.
$S_{1}$ is a sphere with $\frac{3g+2}{2}$ boundary components, $S_{k}$ $(k=2,\cdots, \frac{g-2}{2})$ is a pair of pants bounded by $\alpha_{2k}, \gamma_{2k}, \alpha_{2k+1}$ and $S_{\frac{g}{2}}$ is a torus bounded by $\delta$.
\begin{figure}[htbp]
 \begin{center}
  \includegraphics*[width=8cm]{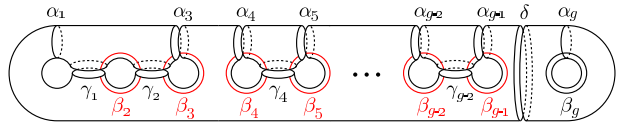}
 \end{center}
 \caption{Cutting the surface, I\hspace{-.1em}I\hspace{-.1em}I}
 \label{fig b}
\end{figure}

We embed $S_{1}$, $S_{2}$,\ldots, $S_{\frac{g-2}{2}}$ in $\mathbb{R}^{3}$ so that they are invariant under $\frac{2\pi}{3}$-rotations $f_{1}$, $f_{2}$,\ldots, $f_{\frac{g-2}{2}}$, respectively (cf. Figure~\ref{fig7}).
We define that $f_{\frac{g}{2}}=(T_{\beta_{g}}T_{\alpha_{g}})^{2}$.
\begin{figure}[htbp]
 \begin{center}
  \includegraphics*[width=6cm]{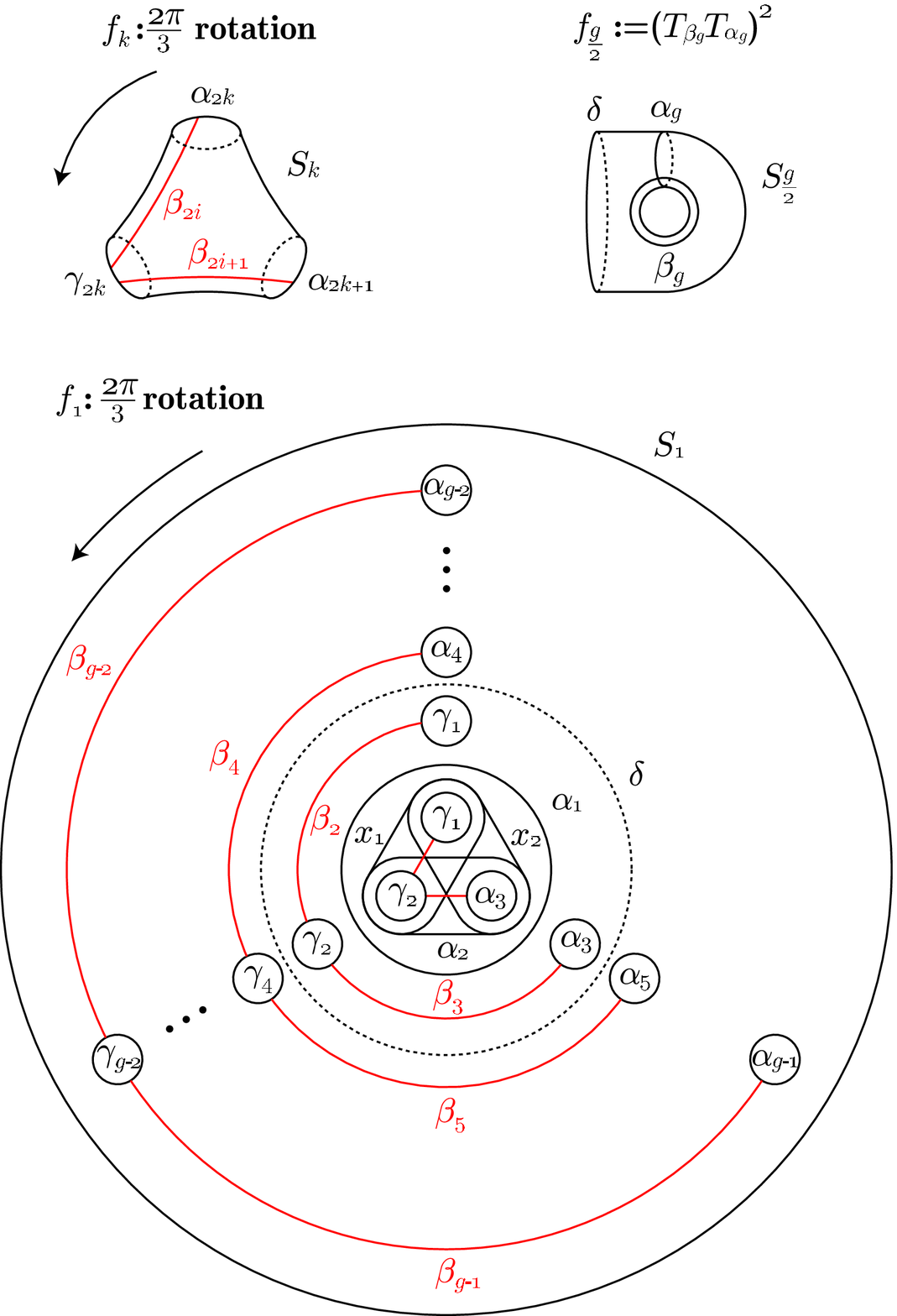}
 \end{center}
 \caption{$\mathbb{Z}_{3}$-symmetry of $\Sigma_{g}$, I\hspace{-.1em}I\hspace{-.1em}I}
 \label{fig7}
\end{figure}

Since the homeomorphisms $f_{1}$, $f_{2}$,\ldots, $f_{\frac{g-2}{2}}$, $f_{\frac{g}{2}}$ coincide on the boundaries, they naturally define a homeomorphism $f:\Sigma_{g}\rightarrow \Sigma_{g}$ of order 3.
$f$ acts on the curves on $\Sigma_{g}$ as follows:
\begin{center}
\begin{tabular}{lcr}
$f^{2}(\gamma_{1})=f(\gamma_{2})=\alpha_{3},$& $f(\beta_{2})=\beta_{3},$& $$\\
$f^{2}(\alpha_{2})=f(x_{2})=x_{1},$& $f(\alpha_{g})=\beta_{g},$& $$\\
$f^{2}(\alpha_{2k})=f(\gamma_{2k})=\alpha_{2k+1},$& $f(\beta_{2k})=\beta_{2k+1}$& $(k=2,\cdots, \frac{g-2}{2}).$
\end{tabular}
\end{center}

\

We cut $\Sigma_{g}$ along the curves $\alpha_{2j-1}, \gamma_{2j-1}, \alpha_{2j}$ $(j=1,\cdots, \frac{g}{2})$ to obtain $\frac{g+2}{2}$ surfaces $S_{1}^{\prime}$, $S_{2}^{\prime}$,\ldots, $S_{\frac{g+2}{2}}^{\prime}$ as shown in Figure~\ref{fig c}.
$S_{1}^{\prime}$ is a sphere with $\frac{3g}{2}$ boundary components and $S_{j+1}^{\prime}$ $(j=1,\cdots, \frac{g}{2})$ is a pair of pants bounded by $\alpha_{2j-1}, \gamma_{2j-1}, \alpha_{2j}$.
\begin{figure}[htbp]
 \begin{center}
  \includegraphics*[width=7cm]{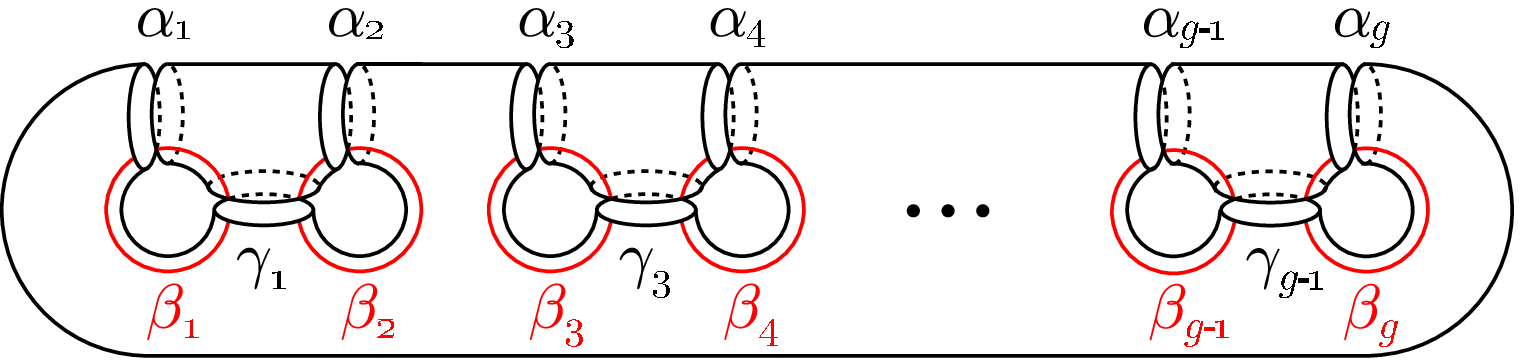}
 \end{center}
 \caption{Cutting the surface, I\hspace{-.1em}V}
 \label{fig c}
\end{figure}

We embed $S_{1}^{\prime}$, $S_{2}^{\prime}$,\ldots, $S_{\frac{g+2}{2}}^{\prime}$ in $\mathbb{R}^{3}$ so that they are invariant under $\frac{2\pi}{3}$-rotations $h_{1}$, $h_{2}$,\ldots, $h_{\frac{g+2}{2}}$, respectively (cf. Figure~\ref{fig8}).
\begin{figure}[htbp]
 \begin{center}
  \includegraphics*[width=6cm]{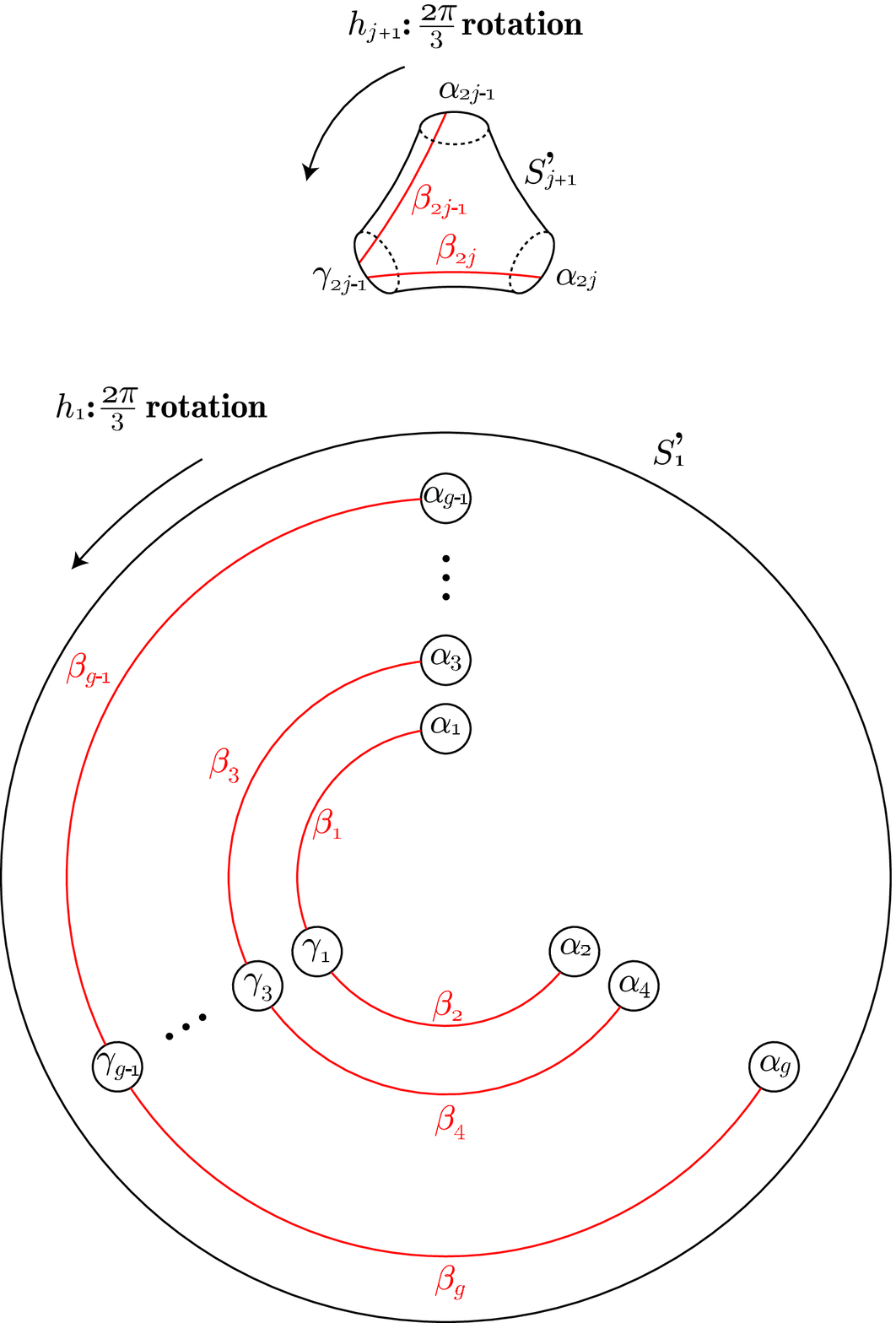}
 \end{center}
 \caption{$\mathbb{Z}_{3}$-symmetry of $\Sigma_{g}$, I\hspace{-.1em}V}
 \label{fig8}
\end{figure}

Since the homeomorphisms $h_{1}$,$h_{2}$,\ldots, $h_{\frac{g+2}{2}}$ coincide on the boundaries, they naturally define a homeomorphism $h:\Sigma_{g}\rightarrow \Sigma_{g}$ of order 3.
$h$ acts on the curves on $\Sigma_{g}$ as follows:
\begin{center}
\begin{tabular}{lcr}
$h^{2}(\alpha_{2j-1})=h(\gamma_{2j-1})=\alpha_{2j},$& $h(\beta_{2j-1})=\beta_{2j}$& $(j=1,\cdots, \frac{g}{2}).$
\end{tabular}
\end{center}

\subsection{Generating the Dehn twist by 3 elemensts of order 3}
We generate the Dehn twist by 3 elements of order 3.
The basic idea is to use the \textit{lantern relation} which was discovered by Dehn and rediscovered by Johnson [Jo].
\begin{figure}[htbp]
 \begin{center}
  \includegraphics*[width=2cm]{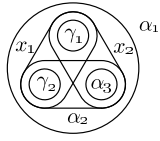}
 \end{center}
 \caption{The Lantern Relation}
 \label{fig9}
\end{figure}

The \textit{lantern relation} is read as follows :
\begin{eqnarray*}
T_{\alpha_{1}}T_{\gamma_{1}}T_{\gamma_{2}}T_{\alpha_{3}}=T_{\alpha_{2}}T_{x_{1}}T_{x_{2}}.
\end{eqnarray*}
where the curves $\alpha_{1}$, $\alpha_{2}$, $\alpha_{3}$, $\gamma_{1}$, $\gamma_{2}$, $x_{1}$ and $x_{2}$ are shown in Figure~\ref{fig a} and Figure~\ref{fig9}.
Since $\alpha_{1},\gamma_{1},\gamma_{2}$ and $\alpha_{3}$ are disjoint each other and $\alpha_{2}, x_{1}$ and $x_{2}$, by Lemma~\ref{lem2} we can rewrite the relation as
\begin{eqnarray}\label{eq1}
T_{\alpha_{1}}=(T_{\alpha_{2}}T_{\gamma_{1}}^{-1})(T_{x_{1}}T_{\alpha_{3}}^{-1})(T_{x_{2}}T_{\gamma_{2}}^{-1}).
\end{eqnarray}
We can find that $f^{2}(\alpha_{2})=x_{1}$, $f^{2}(\gamma_{1})=\alpha_{3}$, $f(\alpha_{2})=x_{2}$ and $f(\gamma_{1})=\gamma_{2}$ from the argument of Section 3.1.
By using Lemma~\ref{lem1} we see that
\begin{eqnarray*}
(T_{x_{1}}T_{\alpha_{3}}^{-1})=f^{2}(T_{\alpha_{2}}T_{\gamma_{1}}^{-1})f^{-2}\\ 
(T_{x_{2}}T_{\gamma_{2}}^{-1})=f(T_{\alpha_{2}}T_{\gamma_{1}}^{-1})f^{-1}. 
\end{eqnarray*}
Since $h$ maps $\gamma_{1}$ to $\alpha_{2}$, we see that
\begin{eqnarray*}
T_{\alpha_{2}}=hT_{\gamma_{1}}h^{-1}
\end{eqnarray*}
and
\begin{eqnarray*}
T_{\alpha_{2}}T_{\gamma_{1}}^{-1}=hT_{\gamma_{1}}h^{-1}T_{\gamma_{1}}^{-1}=h(T_{\gamma_{1}}h^{-1}T_{\gamma_{1}}^{-1}).
\end{eqnarray*}

Let $\bar{h}$ denote $T_{\gamma_{1}}h^{-1}T_{\gamma_{1}}^{-1}$.
We can now rewrite (\ref{eq1}) as
\begin{eqnarray}\label{eq4}
T_{\alpha_{1}}=(h\bar{h})(f^{2}h\bar{h}f^{-2})(fh\bar{h}f^{-1}).
\end{eqnarray}
and hence $T_{\alpha_{1}}$ is a product of 3 elements of order 3.

\subsection{Proof that 3 elements of order 3 generate}
We prove Theorem.
\begin{theorem}
If $g\geq 3$, ${\cal M}_{g}$ is generated by $f$, $h$ and $\bar{h}$.
\end{theorem}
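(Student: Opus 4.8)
The plan is to show that the subgroup $G:=\langle f,h,\bar h\rangle$ contains each of Lickorish's generators $T_{\alpha_1},\dots,T_{\alpha_g},T_{\beta_1},\dots,T_{\beta_g},T_{\gamma_1},\dots,T_{\gamma_{g-1}}$; by Lickorish's theorem this immediately gives $G={\cal M}_g$. Relation~(\ref{eq4}) already provides the base case $T_{\alpha_1}\in G$, so everything afterwards will be obtained from $T_{\alpha_1}$ by conjugating with $f$ and $h$ and invoking Lemma~\ref{lem1} in the form $T_{\phi(c)}=\phi T_c\phi^{-1}$.

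First I would extract $T_{\gamma_1}$ and $T_{\alpha_2}$. From the displayed action of $h$ we have $h(\gamma_1)=\alpha_2$ and $h^2(\alpha_1)=\alpha_2$; since $h$ is a homeomorphism this forces $h(\alpha_1)=\gamma_1$, whence $T_{\gamma_1}=hT_{\alpha_1}h^{-1}\in G$ and then $T_{\alpha_2}=hT_{\gamma_1}h^{-1}\in G$ (consistently with $h\bar h=T_{\alpha_2}T_{\gamma_1}^{-1}$). Next, $f(\gamma_1)=\gamma_2$ and $f(\gamma_2)=\alpha_3$ give $T_{\gamma_2},T_{\alpha_3}\in G$. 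For the remaining indices I would run an induction that alternates $f$ and $h$ to ``climb'' the surface: from $T_{\alpha_{2j-1}}\in G$ the identities $h(\alpha_{2j-1})=\gamma_{2j-1}$ and $h(\gamma_{2j-1})=\alpha_{2j}$ yield $T_{\gamma_{2j-1}},T_{\alpha_{2j}}\in G$, while from $T_{\alpha_{2k}}\in G$ with $k\ge 2$ the identities $f(\alpha_{2k})=\gamma_{2k}$ and $f(\gamma_{2k})=\alpha_{2k+1}$ yield $T_{\gamma_{2k}},T_{\alpha_{2k+1}}\in G$. Iterating puts $T_{\alpha_i}\in G$ for $1\le i\le g$ and $T_{\gamma_i}\in G$ for $1\le i\le g-1$.

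It then remains to capture the $T_{\beta_i}$. One extra conjugation does this for the top index: $h(\alpha_g)=\beta_g$ in odd genus (resp. $f(\alpha_g)=\beta_g$ in even genus) gives $T_{\beta_g}\in G$. Since the displayed $\beta$-actions chain as $\beta_1\xrightarrow{h}\beta_2\xrightarrow{f}\beta_3\xrightarrow{h}\cdots\to\beta_g$ (via $h(\beta_{2j-1})=\beta_{2j}$ and $f(\beta_{2k})=\beta_{2k+1}$), running this chain backwards from $\beta_g$ with $f^{-1}$ and $h^{-1}$ gives $T_{\beta_i}\in G$ for every $i$. Hence $G$ contains all of Lickorish's generators and $G={\cal M}_g$; the even-genus case is identical once one substitutes the corresponding orbit data of $f$ and $h$ recorded in Section~3.1.

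The argument is almost entirely bookkeeping, and that is also where the care is needed: each step must read off the correct preimage from the orbit tables (e.g.\ deducing $h(\alpha_1)=\gamma_1$ from $h^2(\alpha_1)=h(\gamma_1)$), and one must check that the index ranges appearing in the $f$- and $h$-actions genuinely chain together to reach every curve --- the mild irregularity at the bottom, where $\alpha_2\to\alpha_3$ has to be bridged through $\gamma_1\to\gamma_2\to\alpha_3$ rather than directly, and the parity split between odd and even genus, are the two places an error could hide. There is no deeper point: once $T_{\alpha_1}\in G$ is supplied by the lantern relation, the rest is conjugation together with Lickorish's theorem.
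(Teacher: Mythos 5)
Your proposal is correct and is essentially the paper's own argument: relation (\ref{eq4}) supplies $T_{\alpha_1}$, and then conjugation by $f$ and $h$ (Lemma~\ref{lem1}), following the orbit data of Section~3.1, carries $\alpha_1$ to every $\alpha_i$ and $\gamma_i$ and carries $\beta_g$ (obtained from $h(\alpha_g)=\beta_g$, resp. $f(\alpha_g)=\beta_g$) to every $\beta_i$, so all Lickorish generators lie in the subgroup. The only difference is presentational: you spell out explicitly the chains that the paper records in Figures~\ref{fig10} and \ref{fig11}.
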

\begin{proof}
Let $G_{1}$ denote the group generated by $f$, $h$ and $\bar{h}$.
By the relation~(\ref{eq4}), $T_{\alpha_{1}}$ is in $G_{1}$.

Let $\alpha$ and $\beta$ be simple closed curves on $\Sigma_{g}$.
The symbol $\alpha \overset{f}{\longrightarrow} \beta$ (resp. $\alpha \overset{h}{\longrightarrow} \beta$) means that $f(\alpha)=\beta$ (resp. $h(\alpha)=\beta$).

The Figure~\ref{fig10} shows that we can send $\alpha_{1}$ to all $\alpha_{i}$ and $\gamma_{i}$ by $f$ and $h$.
\begin{figure}[htbp]
 \begin{center}
  \includegraphics*[width=10cm]{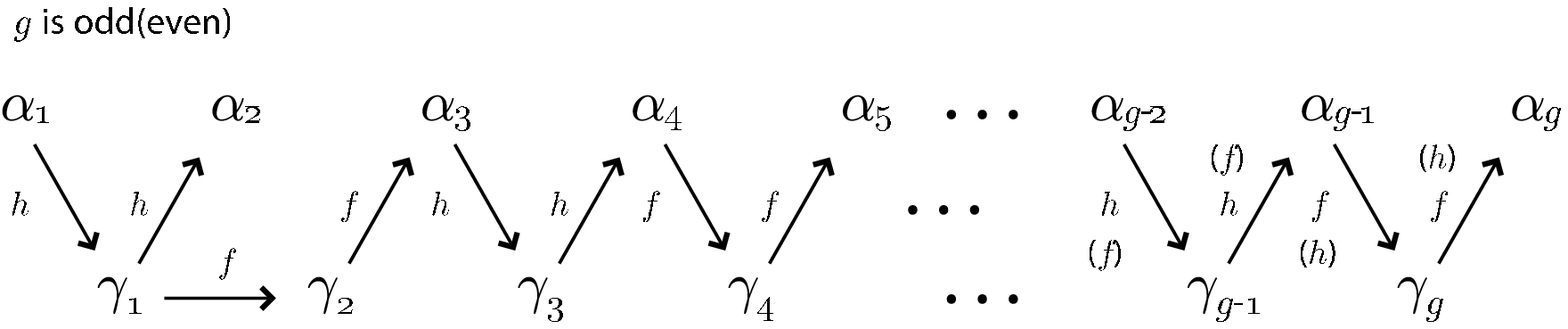}
 \end{center}
 \caption{}
 \label{fig10}
\end{figure}
Therefore, $T_{\alpha_{i}}, T_{\gamma_{i}}\in G_{1}$ for all $i$.

Since $h$ (resp. $f$) maps $\alpha_{g}$ to $\beta_{g}$ in the case of odd (resp. even) genus, $T_{\beta_{g}}\in G_{1}$.
As shown on Figure~\ref{fig11}, we can find that $\beta_{g}$ can be send to $\beta_{i}$ for all $i$ by $f$ and $h$. 
\begin{figure}[htbp]
 \begin{center}
  \includegraphics*[width=10cm]{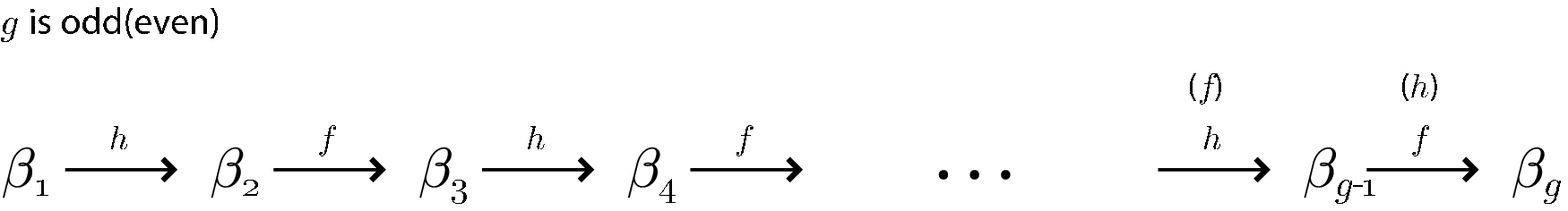}
 \end{center}
 \caption{}
 \label{fig11}
\end{figure}
These shows that $T_{\beta_{i}}\in G_{1}$ for all $i$.

Since we show that all Lickorish's generators are in $G_{1}$, $G_{1}$ is equal to ${\cal M}_{g}$.
\end{proof}

\section{Generating the mapping class group by 4 elements by order 4}
In this section we prove that ${\cal M}_{g}$ can be generated by 4 elements of order 4.
The key point is to use \textit{chain relation}.
\subsection{Construction of elements of order 4}
We prepare to construct 3 elements of order 4.

We recall \textit{chain relation}.
We say that an ordered set of $c_{1}, \ldots, c_{n}$ of simple closed curves on $\Sigma_{g}$ forms an \textit{n-chain} if the geometric intersection $(c_{k}, c_{k+1})=1$ for $k=1, \ldots, n-1$ and $(c_{k}, c_{l})=0$ if $\vert k-l \vert \geq 2$.
If $n$ is odd, the boundary of a regular neighborhood of any $n$-chain has two componets $d_{1}$ and $d_{2}$.

The \textit{chain relation} is read as follow :

For a given $n$-chain $c_{1}, \ldots, c_{n}$, if $n$ is odd we have
\begin{eqnarray*}
(T_{c_{1}}T_{c_{2}}\cdots T_{c_{n}})^{n+1}=T_{d_{1}}T_{d_{2}}
\end{eqnarray*}

We denote the simple closed curves $\alpha^{\prime}_{1},\ldots ,\alpha^{\prime}_{g}$, $s$ and the separating curves $\delta_{1},\ldots ,\delta_{g-1}$ as shown on Figure~\ref{fig13}.
\begin{figure}[htbp]
 \begin{center}
  \includegraphics*[width=6cm]{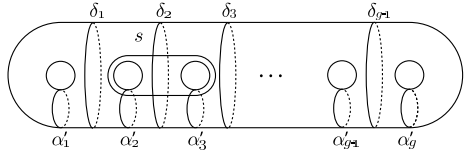}
 \end{center}
 \caption{The simple closed curves $\alpha^{\prime}_{1},\ldots ,\alpha^{\prime}_{g}$, $s$ and the separating curves $\delta_{1},\ldots ,\delta_{g-1}$}
 \label{fig13}
\end{figure}

We note that $\alpha_{1}=\alpha^{\prime}_{1}$ and $\alpha_{g}=\alpha^{\prime}_{g}$.

\

For $i=1, 2,\ldots ,g-2$ we define
\begin{eqnarray*}
\rho_{i}=(T_{\alpha_{i+2}}T_{\beta_{i+2}}T_{\gamma_{i+1}}T_{\beta_{i+1}}T_{\gamma_{i}}T_{\beta_{i}}T_{\alpha^{\prime}_{i}})^{2}.
\end{eqnarray*}
We can find that the boundary components of a regular neighborhood of 7-chain 
$\alpha_{i+2}$, $\beta_{i+2}$, $\gamma_{i+1}$, $\beta_{i+1}$, $\gamma_{i}$, $\beta_{i}$, $\alpha^{\prime}_{i}$ are $\delta_{i+2}$ and $\delta_{i-1}$ like Figure~\ref{fig14}.
\begin{figure}[htbp]
 \begin{center}
  \includegraphics*[width=4cm]{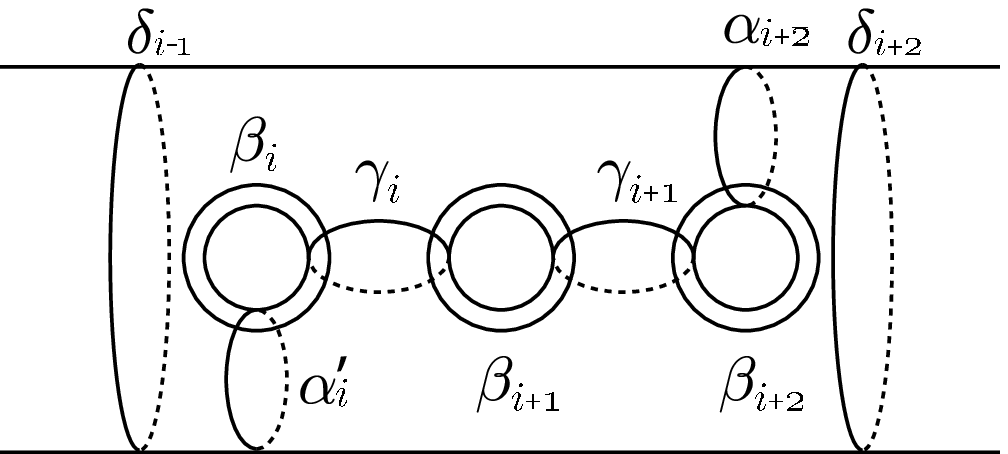}
 \end{center}
 \caption{}
 \label{fig14}
\end{figure}
We note that $\delta_{0}$ and $\delta_{g}$ is trivial.
By \textit{chain relation}, we have
\begin{eqnarray*}
\rho_{i}^{4}=(T_{\alpha_{i+2}}T_{\beta_{i+2}}T_{\gamma_{i+1}}T_{\beta_{i+1}}T_{\gamma_{i}}T_{\beta_{i}}T_{\alpha^{\prime}_{i}})^{8}=T_{\delta_{i+2}}T_{\delta_{i-1}}.
\end{eqnarray*}

For each $i$ we see that $\rho_{i}$ acts on the curves on $\Sigma_{g}$ as follow:
\begin{eqnarray*}
\rho_{i}^{3}(\alpha_{i+2})=\rho_{i}^{2}(\gamma_{i+1})=\rho_{i}(\gamma_{i})=\alpha^{\prime}_{i}, && \rho_{i}^{2}(\beta_{i+2})=\rho_{i}(\beta_{i+1})=\beta_{i}.
\end{eqnarray*}
In paticular we note that $\rho_{1}(\alpha_{2})=x_{1}$.

\

We define 
\begin{eqnarray*}
\rho^{\prime}=(T_{\gamma_{2}}T_{\beta_{3}}T_{\alpha_{3}}T_{s}T_{\gamma_{1}}T_{\beta_{1}}T_{\alpha^{\prime}_{1}})^{2}.
\end{eqnarray*}
We see that the boundary component of a regular neighborhood of 7-chain $\gamma_{2}$, $\beta_{3}$, $\alpha_{3}$, $s$, $\gamma_{2}$, $\beta_{2}$, $\alpha^{\prime}_{1}$ in Figure~\ref{fig15} is $\delta_{3}$.
By \textit{chain relation} we have $(\rho^{\prime})^{4}=T_{\delta_{3}}$.
\begin{figure}[htbp]
 \begin{center}
  \includegraphics*[width=4cm]{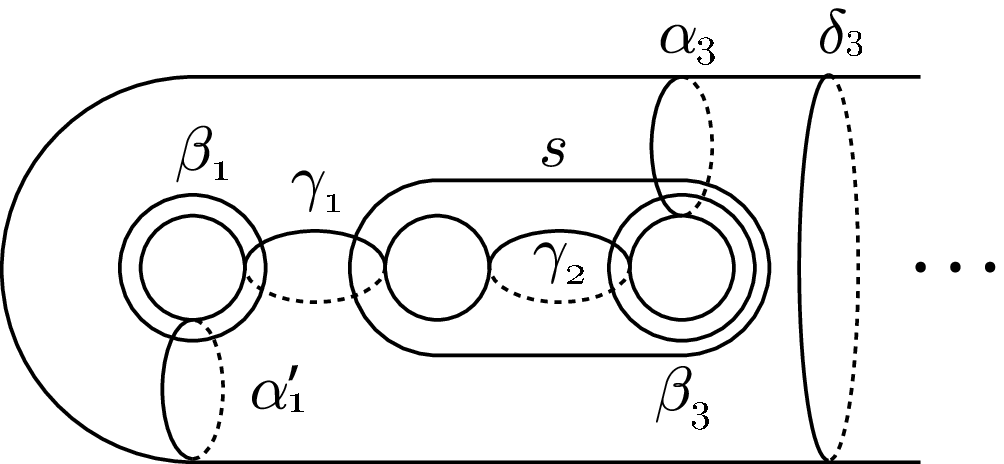}
 \end{center}
 \caption{}
 \label{fig15}
\end{figure}

$\rho^{\prime}$ acts on the curves on $\Sigma_{g}$ as follow:
\begin{eqnarray*}
(\rho^{\prime})^{3}(\gamma_{2})=(\rho^{\prime})^{2}(\alpha_{3})=\rho^{\prime}(\gamma_{1})=\alpha^{\prime}_{1}, && (\rho^{\prime})^{3}(x_{2})=(\rho^{\prime})^{2}(\alpha_{2})=\rho^{\prime}(x_{2})=\alpha_{2}.
\end{eqnarray*}
In paticular we note that $\rho^{\prime}(\alpha_{2})=x_{2}$.

\

We define
\begin{eqnarray*}
\tau_{i}&=&(T_{\alpha^{\prime}_{i+2}}T_{\beta_{i+2}}T_{\gamma_{i+1}}T_{\beta_{i+1}}T_{\gamma_{i}}T_{\beta_{i}}T_{\alpha_{i}})^{2}\\
\end{eqnarray*}
We can find that the boundary components of a regular neighborhood of 7-chain 
$\alpha^{\prime}_{i+2}$, $\beta_{i+2}$, $\gamma_{i+1}$, $\beta_{i+1}$, $\gamma_{i}$, $\beta_{i}$, $\alpha_{i}$ are $\delta_{i+2}$ and $\delta_{i-1}$ like Figure~\ref{fig16}.
By \textit{chain relation} we have $\tau_{i}^{4}$$=$$(T_{\alpha^{\prime}_{i+2}}T_{\beta_{i+2}}T_{\gamma_{i+1}}T_{\beta_{i+1}}T_{\gamma_{i}}T_{\beta_{i}}T_{\alpha_{i}})^{8}$$=$$T_{\delta_{i+2}}T_{\delta_{i-1}}$.
\begin{figure}[htbp]
 \begin{center}
  \includegraphics*[width=4cm]{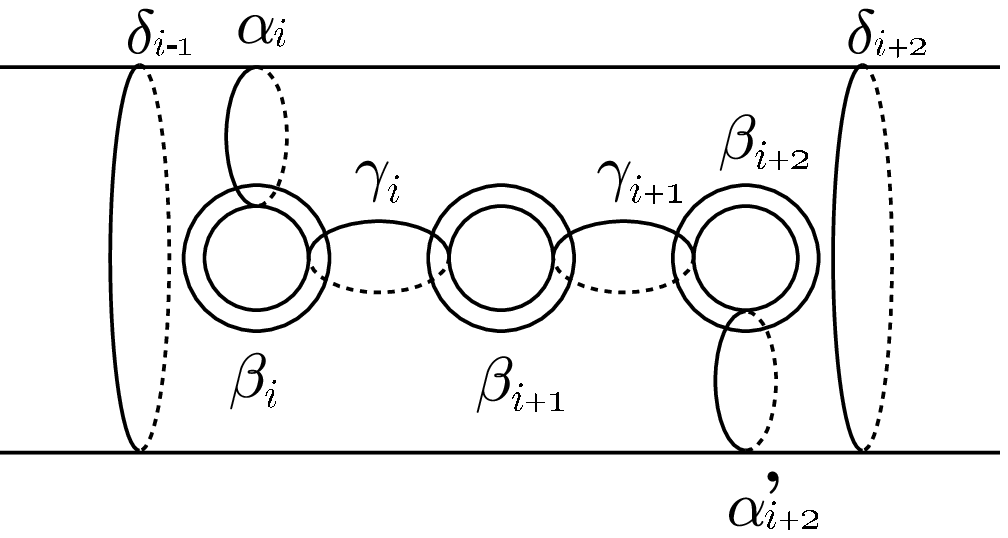}
 \end{center}
 \caption{}
 \label{fig16}
\end{figure}

For each $i$ we see that $\tau_{i}$ acts on the curves on $\Sigma_{g}$ as follow:
\begin{eqnarray*}
\tau^{3}(\alpha^{\prime}_{i+2})=\tau^{2}(\gamma_{i+1})=\tau(\gamma_{i})=\alpha_{i} && \tau^{2}(\beta_{i+2})=\tau(\beta_{i+1})=\beta_{i}.
\end{eqnarray*}

\

For $j=1, 2,\ldots, g$ we define 
\begin{eqnarray*}
\sigma_{j}&=&T_{\alpha_{j}}T_{\beta_{j}}T_{\alpha^{\prime}_{j}}.
\end{eqnarray*}
We can find that the boundary components of a regular neighborhood of 3-chain $\alpha_{j}$, $\beta_{j}$, $\alpha^{\prime}_{j}$ like Figure~\ref{fig17} is $\delta_{j}$ and $\delta_{j-1}$.
By Lemma~\ref{lem3} and \textit{chain relation} we have $\tau^{4}_{j}=T_{\delta_{j}}T_{\delta_{j-1}}$.
\begin{figure}[htbp]
 \begin{center}
  \includegraphics*[width=2cm]{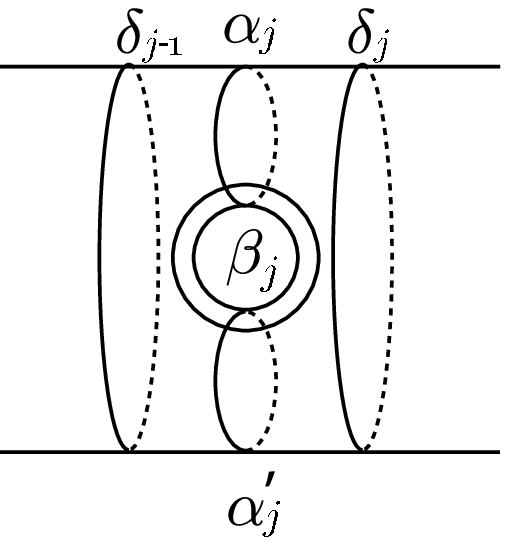}
 \end{center}
 \caption{}
 \label{fig17}
\end{figure}

For each $j$ we see that $\sigma_{j}(\alpha_{j})$$=$$T_{\alpha_{j}}T_{\beta_{j}}T_{\alpha^{\prime}_{j}}(\alpha_{j})$$=$$\beta_{j}$.

\subsubsection{The genus is $3m$}
We assume that $g=3m$.
We construct an element $\phi$ of order 4.
We define
\begin{equation*}
\phi=
\begin{cases}
\rho_{g-2}^{-1}\rho_{g-5}\cdots \rho_{7}^{-1}\rho_{4}\rho_{1}^{-1} & \text{($m (=\frac{g}{3})$ is odd)} \\
\rho_{g-2}\rho_{g-5}^{-1}\cdots \rho_{7}^{-1}\rho_{4}\rho_{1}^{-1} & \text{($m$ is even)}. 
\end{cases}
\end{equation*}

We can find that $\phi^{4}=T_{\delta_{g-3}}^{\mp 1}(T_{\delta_{g-3}}T_{\delta_{g-6}})^{\pm 1}\cdots (T_{\delta_{9}}T_{\delta_{6}})^{-1}(T_{\delta_{6}}T_{\delta_{3}})T_{\delta_{3}}^{-1}=1$.

$\phi$ acts on the curves on $\Sigma_{g}$ as follow:
\begin{center}
\begin{tabular}{lcr}
$\alpha_{2}=\phi(x_{1}),$& $$& $$\\
$\phi^{3}(\alpha^{\prime}_{1})=\phi^{2}(\gamma_{1})=\phi(\gamma_{2})=\alpha_{3},$& $\phi^{2}(\beta_{1})=\phi(\beta_{2})=\beta_{3}$& $$\\
$\alpha^{\prime}_{4}=\phi(\gamma_{4})=\phi^{2}(\gamma_{5})=\phi^{3}(\alpha_{6}),$& $\beta_{4}=\phi(\beta_{5})=\phi^{2}(\beta_{6})$& $$\\
$\vdots$&$\vdots$\\
$\phi^{3}(\alpha^{\prime}_{g-2})=\phi^{2}(\gamma_{g-2})=\phi(\gamma_{g-1})=\alpha_{g},$& $\phi^{2}(\beta_{g-2})=\phi(\beta_{g-1})=\beta_{g}$& $(m \ is \ odd),$\\
\\
$(\alpha^{\prime}_{g-2}=\phi(\gamma_{g-2})=\phi^{2}(\gamma_{g-1})=\phi^{3}(\alpha_{g}),$& $\beta_{g-2}=\phi(\beta_{g-1})=\phi^{2}(\beta_{g})$& $(m \ is \ even))$.
\end{tabular}
\end{center}

\

We construct an element $\psi$ of order 4.
We define
\begin{equation*}
\psi=
\begin{cases}
\rho_{g-2}^{-1}\rho_{g-5}\cdots \rho_{7}^{-1}\rho_{4}\rho^{\prime -1} & \text{($m (=\frac{g}{3})$ is odd)} \\
\rho_{g-2}\rho_{g-5}^{-1}\cdots \rho_{7}^{-1}\rho_{4}\rho^{\prime -1} & \text{($m$ is even)}. 
\end{cases}
\end{equation*}

We can find that $\psi^{4}=1$.
Since $\rho^{\prime}(\alpha_{2})=x_{2}$ and $\rho^{\prime}(\gamma_{2})=\alpha_{3}$, we can easily find that $\psi^{-1}(\alpha_{2})=x_{2}$ and $\psi^{-1}(\gamma_{2})=\alpha_{3}$.

\

We construct an element $\omega$ of order 4.
We define
\begin{equation*}
\omega=
\begin{cases}
\sigma_{g}^{-1}\sigma_{g-1}\tau_{g-5}^{-1}\tau_{g-8}\cdots \tau_{8}\tau_{5}^{-1}\tau_{2}\sigma_{1}^{-1} & \text{($m (=\frac{g}{3})$ is odd)} \\
\sigma_{g}\sigma_{g-1}^{-1}\tau_{g-5}\tau_{g-8}^{-1}\cdots \tau_{8}\tau_{5}^{-1}\tau_{2}\sigma_{1}^{-1} & \text{($m$ is even)}. 
\end{cases}
\end{equation*}
We can find that $\omega^{4}=1$.

$\omega$ acts on the curves on $\Sigma_{g}$ as follow:
\begin{center}
\begin{tabular}{lcr}
$\alpha_{1}=\omega(\beta_{1}),$\\
$\omega^{3}(\alpha_{2})=\omega^{2}(\gamma_{2})=\omega(\gamma_{3})=\alpha^{\prime}_{4},$& $\omega^{2}(\beta_{2})=\omega(\beta_{3})=\beta_{4},$\\
$\alpha_{5}=\omega(\gamma_{5})=\omega^{2}(\gamma_{6})=\omega^{3}(\alpha^{\prime}_{7}),$& $\beta_{5}=\omega(\beta_{6})=\omega^{2}(\beta_{7}),$\\
$\ \ \ \ \ \ \vdots$&$\vdots$\\
$\omega^{3}(\alpha_{g-4})=\omega^{2}(\gamma_{g-4})=\omega(\gamma_{g-3})=\alpha^{\prime}_{g-2},$& $\omega^{2}(\beta_{g-4})=\omega(\beta_{g-3})=\beta_{g-2}$& $(m \ is \ odd),$\\
\\
$(\alpha_{g-4}=\omega(\gamma_{g-4})=\omega^{2}(\gamma_{g-3})=\omega^{3}(\alpha^{\prime}_{g-2}),$& $\beta_{g-4}=\omega(\beta_{g-3})=\omega^{2}(\beta_{g-2})$& $(m \ is \ even)).$
\end{tabular}
\end{center}

\subsubsection{The genus is $3m+1$}
We assume that $g=3m+1$.

We define
\begin{equation*}
\phi=
\begin{cases}
\sigma_{g}\rho_{g-3}^{-1}\rho_{g-6}\cdots \rho_{7}^{-1}\rho_{4}\rho_{1}^{-1} & \text{($m (=\frac{g-1}{3})$ is odd)} \\
\sigma_{g}^{-1}\rho_{g-3}\rho_{g-6}^{-1}\cdots \rho_{7}^{-1}\rho_{4}\rho_{1}^{-1} & \text{($m$ is even)}. 
\end{cases}
\end{equation*}

$\phi$ acts on the curves on $\Sigma_{g}$ as follow:
\begin{center}
\begin{tabular}{lcr}
$\alpha_{2}=\phi(x_{1}),$\\
$\phi^{3}(\alpha^{\prime}_{1})=\phi^{2}(\gamma_{1})=\phi(\gamma_{2})=\alpha_{3},$& $\phi^{2}(\beta_{1})=\phi(\beta_{2})=\beta_{3},$\\
$\alpha^{\prime}_{4}=\phi(\gamma_{4})=\phi^{2}(\gamma_{5})=\alpha_{6},$& $\beta_{4}=\phi(\beta_{5})=\phi^{2}(\beta_{6})$\\
$\ \ \ \ \ \ \vdots$&$\vdots$\\
$\phi^{3}(\alpha^{\prime}_{g-3})=\phi^{2}(\gamma_{g-3})=\phi(\gamma_{g-2})=\alpha_{g-1},$& $\phi^{2}(\beta_{g-3})=\phi(\beta_{g-2})=\beta_{g-1}$& $(m \ is \ odd),$\\
\\
$(\alpha^{\prime}_{g-3}=\phi(\gamma_{g-3})=\phi^{2}(\gamma_{g-2})=\phi^{3}(\alpha_{g-1}),$& $\beta_{g-3}=\phi(\beta_{g-2})=\phi^{2}(\beta_{g-1})$& $(m \ is \ even)).$
\end{tabular}
\end{center}

\

We define
\begin{equation*}
\psi=
\begin{cases}
\rho_{g-2}^{-1}\rho_{g-5}\cdots \rho_{7}^{-1}\rho_{4}\rho^{\prime -1} & \text{($m (=\frac{g-1}{3})$ is odd)} \\
\rho_{g-2}\rho_{g-5}^{-1}\cdots \rho_{7}^{-1}\rho_{4}\rho^{\prime -1} & \text{($m$ is even)}. 
\end{cases}
\end{equation*}

We note that $\psi^{-1}(\alpha_{2})=x_{2}$ and $\psi^{-1}(\gamma_{2})=\alpha_{3}$.

\

We define
\begin{equation*}
\omega=
\begin{cases}
\tau_{g-2}^{-1}\tau_{g-5}\cdots \tau_{8}\tau_{5}^{-1}\tau_{2}\sigma_{1}^{-1} & \text{($m (=\frac{g-1}{3})$ is odd)} \\
\tau_{g-2}\tau_{g-5}^{-1}\cdots \tau_{8}\tau_{5}^{-1}\tau_{2}\sigma_{1}^{-1} & \text{($m$ is even)}. 
\end{cases}
\end{equation*}

$\omega$ acts on the curves on $\Sigma_{g}$ as follow:
\begin{center}
\begin{tabular}{lcr}
$\alpha_{1}=\omega(\beta_{1}),$\\
$\omega^{3}(\alpha_{2})=\omega^{2}(\gamma_{2})=\omega(\gamma_{3})=\alpha^{\prime}_{4},$& $\omega^{2}(\beta_{2})=\omega(\beta_{3})=\beta_{4},$\\
$\alpha_{5}=\omega(\gamma_{5})=\omega^{2}(\gamma_{6})=\omega^{3}(\alpha^{\prime}_{7}),$& $\beta_{5}=\omega(\beta_{6})=\omega^{2}(\beta_{7}),$\\
$\ \ \ \ \ \ \vdots$&$\vdots$\\
$\omega^{3}(\alpha_{g-2})=\omega^{2}(\gamma_{g-2})=\omega(\gamma_{g-1})=\alpha^{\prime}_{g},$& $\omega^{2}(\beta_{g-2})=\omega(\beta_{g-1})=\beta_{g}$& $(m \ is \ odd),$\\
\\
$(\alpha_{g-2}=\omega(\gamma_{g-2})=\omega^{2}(\gamma_{g-1})=\alpha^{\prime}_{g},$& $\omega^{2}(\beta_{g-2})=\omega(\beta_{g-1})=\beta_{g}$& $(m \ is \ even)).$
\end{tabular}
\end{center}

\subsubsection{The genus is $3m+2$}
We assume that $g=3m+2$ $(m\geq 2)$.

We define
\begin{equation*}
\phi=
\begin{cases}
\sigma_{g}^{-1}\sigma_{g-1}\rho_{g-4}^{-1}\rho_{g-7}\cdots \rho_{7}^{-1}\rho_{4}\rho_{1}^{-1} & \text{($m (=\frac{g-2}{3})$ is odd)} \\
\sigma_{g}\sigma_{g-1}^{-1}\rho_{g-4}\rho_{g-7}^{-1}\cdots \rho_{7}^{-1}\rho_{4}\rho_{1}^{-1} & \text{($m$ is even)}. 
\end{cases}
\end{equation*}

$\phi$ acts on the curves on $\Sigma_{g}$ as follow:
\begin{center}
\begin{tabular}{lcr}
$\alpha_{2}=\phi(x_{1}),$\\
$\phi^{3}(\alpha^{\prime}_{1})=\phi^{2}(\gamma_{1})=\phi(\gamma_{2})=\alpha_{3},$& $\phi^{2}(\beta_{1})=\phi(\beta_{2})=\beta_{3},$\\
$\alpha^{\prime}_{4}=\phi(\gamma_{4})=\phi^{2}(\gamma_{5})=\phi^{3}(\alpha_{6}),$& $\beta_{4}=\phi(\beta_{5})=\phi^{2}(\beta_{6}),$\\
$\ \ \ \ \ \ \vdots$&$\vdots$\\
$\phi^{3}(\alpha^{\prime}_{g-4})=\phi^{2}(\gamma_{g-4})=\phi(\gamma_{g-3})=\alpha_{g-2},$& $\phi^{2}(\beta_{g-4})=\phi(\beta_{g-3})=\beta_{g-2}$& $(m \ is \ odd),$\\
\\
$(\alpha^{\prime}_{g-4}=\phi(\gamma_{g-4})=\phi^{2}(\gamma_{g-3})=\phi^{3}(\alpha_{g-2}),$& $\beta_{g-4}=\phi(\beta_{g-3})=\phi^{2}(\beta_{g-2})$& $(m \ is \ even)).$\\
\end{tabular}
\end{center}

\

We define
\begin{equation*}
\psi=
\begin{cases}
\rho_{g-2}^{-1}\rho_{g-5}\cdots \rho_{9}^{-1}\rho_{6}\sigma_{5}^{-1}\sigma_{4}\rho^{\prime -1} & \text{($m (=\frac{g-2}{3})$ is odd)} \nonumber\\
\rho_{g-2}\rho_{g-5}^{-1}\cdots \rho_{9}^{-1}\rho_{6}\sigma_{5}^{-1}\sigma_{4}\rho^{\prime -1} & \text{($m$ is even)}. \nonumber
\end{cases}
\end{equation*}

We note that $\psi$ acts on the curves on $\Sigma_{g}$ as follow:
\begin{center}
\begin{tabular}{lcr}
$\alpha_{2}=\psi(x_{2}),$& $\gamma_{2}=\psi(\alpha_{3}),$\\
$\psi^{3}(\alpha^{\prime}_{g-2})=\psi^{2}(\gamma_{g-2})=\psi(\gamma_{g-1})=\alpha_{g},$& $\psi^{2}(\beta_{g-2})=\psi(\beta_{g-1})=\beta_{g}$& $(m \ is \ odd),$\\
\\
$(\alpha^{\prime}_{g-2}=\psi(\gamma_{g-2})=\psi^{2}(\gamma_{g-1})=\psi^{3}(\alpha_{g}),$& $\beta_{g-2}=\psi(\beta_{g-1})=\psi^{2}(\beta_{g})$& $(m \ is \ even)).$
\end{tabular}
\end{center}

\

We define
\begin{equation*}
\omega=
\begin{cases}
\sigma_{g}\tau_{g-3}^{-1}\tau_{g-6}\cdots \tau_{8}\tau_{5}^{-1}\tau_{2}\sigma_{1}^{-1} & \text{($m (=\frac{g-2}{3})$ is odd)} \\
\sigma_{g}^{-1}\tau_{g-3}\tau_{g-6}^{-1}\cdots \tau_{8}\tau_{5}^{-1}\tau_{2}\sigma_{1}^{-1} & \text{($m$ is even)}. 
\end{cases}
\end{equation*}

$\omega$ acts on the curves on $\Sigma_{g}$ as follow:
\begin{center}
\begin{tabular}{lcr}
$\alpha_{1}=\omega(\beta_{1}),$\\
$\omega^{3}(\alpha_{2})=\omega^{2}(\gamma_{2})=\omega(\gamma_{3})=\alpha^{\prime}_{4},$& $\omega^{2}(\beta_{2})=\omega(\beta_{3})=\beta_{4},$\\
$\alpha^{\prime}_{5}=\omega(\gamma_{5})=\omega^{2}(\gamma_{6})=\omega^{3}(\alpha^{\prime}_{7}),$& $\beta_{5}=\omega(\beta_{6})=\omega^{2}(\beta_{7}),$\\
$\ \ \ \ \ \ \vdots$&$\vdots$\\
$\omega^{3}(\alpha_{g-3})=\omega^{2}(\gamma_{g-3})=\omega(\gamma_{g-2})=\alpha^{\prime}_{g-1},$& $\omega^{2}(\beta_{g-3})=\omega(\beta_{g-2})=\beta_{g-1}$& $(m \ is \ odd),$\\
\\
$(\alpha_{g-3}=\omega(\gamma_{g-3})=\omega^{2}(\gamma_{g-2})=\omega^{3}(\alpha^{\prime}_{g-1}),$& $\beta_{g-3}=\omega(\beta_{g-2})=\omega^{2}(\beta_{g-1})$& $(m \ is \ even)).$
\end{tabular}
\end{center}

\subsubsection{The genus is 5}
We assume that $g=5$.

We define 
\begin{eqnarray*}
\phi=(T_{\alpha_{5}}T_{\beta_{5}}T_{\gamma_{4}})(T_{\gamma_{3}}T_{\beta_{3}}T_{\gamma_{2}}T_{\beta{2}}T_{\gamma_{1}}T_{\beta_{1}}T_{\alpha^{\prime}_{1}})^{-2}.
\end{eqnarray*}
By \textit{chain reltion}, $(T_{\alpha_{5}}T_{\beta_{5}}T_{\gamma_{4}})^{4}$$=$$(T_{\gamma_{3}}T_{\beta_{3}}T_{\gamma_{2}}T_{\beta{2}}T_{\gamma_{1}}T_{\beta_{1}}T_{\alpha^{\prime}_{1}})^{8}$$=$$T_{\alpha_{4}}T_{\alpha^{\prime}_{4}}$
Therefore, we can find that $\phi^{4}=1$.

$\phi$ acts on the curves on $\Sigma_{g}$ as follow :
\begin{eqnarray*}
\phi^{3}(\alpha^{\prime}_{1})=\phi^{2}(\gamma_{1})=\phi(\gamma_{2})=\alpha_{3}, &&\phi^{2}(\beta_{1})=\phi(\beta_{2})=\beta_{3}, \\
\alpha_{2}=\phi(x_{1}), &&\phi(\gamma_{4})=\beta_{5}.
\end{eqnarray*}

\

We define
\begin{eqnarray*}
\psi=\sigma_{5}\sigma_{4}\rho^{\prime -1}.
\end{eqnarray*}
We can find that $\psi^{4}=1$.

$\psi$ acts on the curves on $\Sigma_{g}$ as follow :
\begin{eqnarray*}
\psi^{3}(\alpha^{\prime}_{1})=\psi^{2}(\gamma_{1})=\psi(\alpha_{3})=\gamma_{2}, &&\psi(\alpha_{2})=x_{2}.
\end{eqnarray*}

\

Let $s_{1}$, $s_{2}$ the non separating simple closed curves as shown in Figure~\ref{fig30}
\begin{figure}[htbp]
 \begin{center}
  \includegraphics*[width=5cm]{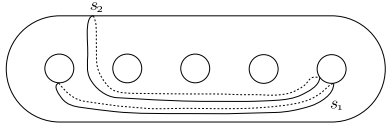}
 \end{center}
 \caption{the curves $s_{1}$ and $s_{2}$.}
 \label{fig30}
\end{figure}

We define
\begin{eqnarray*}
\omega=(T_{\alpha_{1}}T_{\beta_{1}}T_{s_{1}})(T_{\gamma_{4}}T_{\beta_{4}}T_{\gamma_{3}}T_{\beta_{3}}T_{\gamma_{2}}T_{\beta_{2}}T_{\alpha_{2}})^{-2}.
\end{eqnarray*}
By the \textit{chain relation}, $(T_{\alpha_{1}}T_{\beta_{1}}T_{s_{1}})^{4}$$=$$(T_{\gamma_{4}}T_{\beta_{4}}T_{\gamma_{3}}T_{\beta_{3}}T_{\gamma_{2}}T_{\beta_{2}}T_{\alpha_{2}})^{8}$$=$$T_{\alpha_{g}}T_{s_{2}}$.
Therefore, we can find that $\omega^{4}=1$.

$\omega$ acts on the curves on $\Sigma_{g}$ as follow :
\begin{eqnarray*}
\omega^{3}(\alpha_{2})=\omega^{2}(\gamma_{2})=\omega(\gamma_{3})=\gamma_{4}, &&\omega^{2}(\beta_{2})=\omega(\beta_{3})=\beta_{4}.
\end{eqnarray*}

\subsection{Generating the Dehn twist by 4 elements of order 4}
By using the lantern relation we generate the Dehn twist by 4 elements of order 4.

Since $\omega$ maps $\alpha_{2}$ to $\gamma_{2}$, we see that
\begin{eqnarray*}
T_{\alpha_{2}}=\omega T_{\gamma_{2}}\omega^{-1}
\end{eqnarray*}
and
\begin{eqnarray*}
T_{\alpha_{2}}T_{\gamma_{2}}^{-1}=\omega T_{\gamma_{2}}\omega^{-1}T_{\gamma_{2}}^{-1}=\omega (T_{\gamma_{2}}\omega^{-1}T_{\gamma_{2}}^{-1}).
\end{eqnarray*}
Let $\tilde{\omega}$ denote $T_{\gamma_{2}}\omega^{-1}T_{\gamma_{2}}^{-1}$.
Then we see that $T_{\alpha_{2}}T_{\gamma_{2}}^{-1}=\omega \tilde{\omega}$.

In the case of $g\neq 5$, we rewrite the lantern relation as follow :
\begin{eqnarray}\label{eq6}
T_{\alpha_{1}}=T_{\alpha_{1}^{\prime}}=(T_{\alpha_{2}}T_{\gamma_{2}}^{-1})(T_{x_{1}}T_{\gamma_{1}}^{-1})(T_{x_{2}}T_{\alpha_{3}}^{-1}).
\end{eqnarray}
From the argument of Section 4.1.1, 4.1.2 and 4.1.3 we can find that $\phi^{-1}(\alpha_{2})=x_{1}$, $\phi^{-1}(\gamma_{2})=\gamma_{1}$, $\psi^{-1}(\alpha_{2})=x_{2}$ and $\psi^{-1}(\gamma_{2})=\alpha_{3}$.
By Lemma~\ref{lem1} show we see that :
\begin{eqnarray*}
(T_{x_{1}}T_{\gamma_{2}}^{-1})=\phi^{-1}(T_{\alpha_{2}}T_{\gamma_{1}}^{-1})\phi \\ 
(T_{x_{2}}T_{\alpha_{3}}^{-1})=\psi^{-1}(T_{\alpha_{2}}T_{\gamma_{1}}^{-1})\psi. 
\end{eqnarray*}
We can rewrite (\ref{eq6}) as
\begin{eqnarray*}
T_{\alpha_{1}}=(\omega \tilde{\omega})(\phi^{-1} \omega \tilde{\omega}\phi)(\psi^{-1} \omega \tilde{\omega}\psi).
\end{eqnarray*}

In the case of $g=5$, we rewrite the lantern relation as follow :
\begin{eqnarray}\label{eq9}
T_{\alpha_{3}}=(T_{\alpha_{2}}T_{\gamma_{2}}^{-1})(T_{x_{1}}T_{\gamma_{1}}^{-1})(T_{x_{2}}T_{\alpha_{1}}^{-1}).
\end{eqnarray}
From the argument of Section 4.1.3 we can find that $\phi^{-1}(\alpha_{2})=x_{1}$, $\phi^{-1}(\gamma_{2})=\gamma_{1}$, $\psi(\alpha_{2})=x_{2}$ and $\psi(\gamma_{2})=\alpha_{1}^{\prime}=\alpha_{1}$.
By Lemma~\ref{lem1} we see that :
\begin{eqnarray*}
(T_{x_{1}}T_{\gamma_{1}}^{-1})=\phi^{-1}(T_{\alpha_{2}}T_{\gamma_{2}}^{-1})\phi \\ 
(T_{x_{2}}T_{\alpha_{1}}^{-1})=\psi(T_{\alpha_{2}}T_{\gamma_{2}}^{-1})\psi^{-1}.
\end{eqnarray*}
We can rewrite (\ref{eq9})
\begin{eqnarray*}
T_{\alpha_{3}}=(\omega \tilde{\omega})(\phi^{-1} \omega \tilde{\omega}\phi)(\psi \omega \tilde{\omega}\psi^{-1}).
\end{eqnarray*}

Hence Dehn tiwst is a product of 4 elements of order 4.

\subsection{Proof that 4 elements of order 4 generate}
We prove Theorem.
\begin{theorem}
If $g$ is at least 3, ${\cal M}_{g}$ can be generated by $\phi$, $\psi$, $\omega$ and $\tilde{\omega}$.
\end{theorem}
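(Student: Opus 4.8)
The plan is to mimic the proof of the order-3 case. Let $G_2$ denote the subgroup of ${\cal M}_g$ generated by $\phi$, $\psi$, $\omega$ and $\tilde{\omega}$. By Section 4.3, the Dehn twist $T_{\alpha_1}$ (or $T_{\alpha_3}$ when $g=5$) lies in $G_2$, since it has been written as a product of $\omega$, $\tilde{\omega}$ and conjugates of $\omega\tilde{\omega}$ by powers of $\phi$ and $\psi$. So the whole task reduces to showing that starting from this one Dehn twist and applying $\phi$, $\psi$, $\omega$ (and their inverses) as conjugating homeomorphisms, one can reach every curve in Humphries's (or Lickorish's) generating set; then Lemma~\ref{lem1} and Humphries's theorem finish the proof. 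Thus the argument is entirely a bookkeeping argument about the orbits of curves under $\langle \phi,\psi,\omega\rangle$.

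First I would establish that $G_2$ contains $T_{\alpha_i'}$ and $T_{\gamma_i}$ for all $i$, and the two extra twists $T_{\alpha_2}$ and $T_{x_1}$ (equivalently $T_{x_2}$). This is read off directly from the curve-action tables in Sections 4.1.1--4.1.4: in each genus-residue case the chains of equalities like $\phi^3(\alpha'_{i})=\phi^2(\gamma_{i})=\phi(\gamma_{i+1})=\alpha_{i+3}$, together with $\alpha_2=\phi(x_1)$ and the relations for $\psi$ and $\omega$, link $\alpha_1=\alpha_1'$ to every $\alpha_j'$ and every $\gamma_j$. One produces, as in Figure~\ref{fig10}, an explicit directed path of arrows labelled $\phi$, $\psi$, $\omega$ from $\alpha_1$ to each target curve. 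Since $\alpha_1=\alpha_1'$ and $\alpha_g=\alpha_g'$, this already recovers $T_{\alpha_1}$ and $T_{\alpha_g}$, two of Humphries's generators.

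Next I would obtain the $\beta$-twists. The relevant input is that $\omega$ sends $\beta_1$ to $\alpha_1$ (so $T_{\beta_1}\in G_2$), and then the chains $\omega^2(\beta_j)=\omega(\beta_{j+1})=\beta_{j+2}$ (together with the analogous $\phi$- and $\psi$-chains $\phi^2(\beta_i)=\phi(\beta_{i+1})=\beta_{i+2}$, and $f(\beta_{2k})=\beta_{2k+1}$-type steps) propagate this to all $\beta_j$; a figure analogous to Figure~\ref{fig11} displays these arrows. After that, all of $T_{\alpha_1},T_{\alpha_2},T_{\beta_1},\dots,T_{\beta_g},T_{\gamma_1},\dots,T_{\gamma_{g-1}}$ lie in $G_2$, which is exactly Humphries's generating set, so $G_2={\cal M}_g$. (In the $g=5$ case one starts from $T_{\alpha_3}$ instead and uses the specially tailored $\phi,\psi,\omega$ of Section 4.1.4, but the logic is identical.)

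The main obstacle is not conceptual but organizational: one must verify that the orbit of a single curve under $\langle\phi,\psi,\omega\rangle$ really sweeps out \emph{all} of $\alpha_i',\gamma_i,\beta_i$ in \emph{each} of the four genus cases ($g=3m$, $g=3m+1$, $g=3m+2$, and the exceptional $g=5$), and in each case for both parities of $m$, keeping the signs in the definitions of $\phi,\psi,\omega$ consistent. Care is also needed at the ``seams'' of the chains --- the curves $x_1$, $x_2$, $s$, $s_1$, $s_2$ where one chain relation hands off to the next --- to be sure no curve in the Humphries set is missed. I expect this case analysis to be carried out by exhibiting, for each residue class, an explicit diagram of $\phi,\psi,\omega$-arrows (as in Figures~\ref{fig10} and~\ref{fig11}) and simply observing that it is connected and reaches every generator.
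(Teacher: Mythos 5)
Your proposal follows essentially the same route as the paper's proof: starting from the single Dehn twist ($T_{\alpha_1}$, or $T_{\alpha_3}$ when $g=5$) obtained from the lantern-relation expression, and then conjugating by $\phi$, $\psi$, $\omega$ along orbits of curves (as in the paper's Figures~\ref{fig31}--\ref{fig33}) until all of Humphries's generators lie in $G_2$. The case-by-case orbit bookkeeping you describe, including the use of $\omega(\beta_1)=\alpha_1$ and $\omega(\alpha_2)=\gamma_2$ type relations, is exactly what the paper does, so the proposal is correct and not a different argument.
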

\begin{proof}
Let $G_{2}$ denote the group generated by $\phi$, $\psi$, $\omega$ and $\tilde{\omega}$.

Let $\alpha$ and $\beta$ be simple closed curves on $\Sigma_{g}$.
The symbol $\alpha \overset{\phi}{\longleftrightarrow} \beta$ (resp. $\alpha \overset{\psi}{\longleftrightarrow} \beta$, $\alpha \overset{\omega}{\longleftrightarrow} \beta$) means that either $\phi(\alpha)=\beta$ or $\phi^{-1}(\alpha)=\beta$ (resp. either $\psi(\alpha)=\beta$ or $\psi^{-1}(\alpha)=\beta$, either $\omega(\alpha)=\beta$ or $\omega^{-1}(\alpha)=\beta$).

If $g\neq 5$, we can find that $T_{\alpha_{1}}$ is in $G_{2}$.
In the case of $g=3m$ and $g=3m+1$, $\phi$ and $\omega$ can map $\alpha_{1}$ to all $\beta_{i}$ and $\gamma_{i}$ as shown in Figure~\ref{fig31}.
\begin{figure}[htbp]
 \begin{center}
  \includegraphics*[width=12cm]{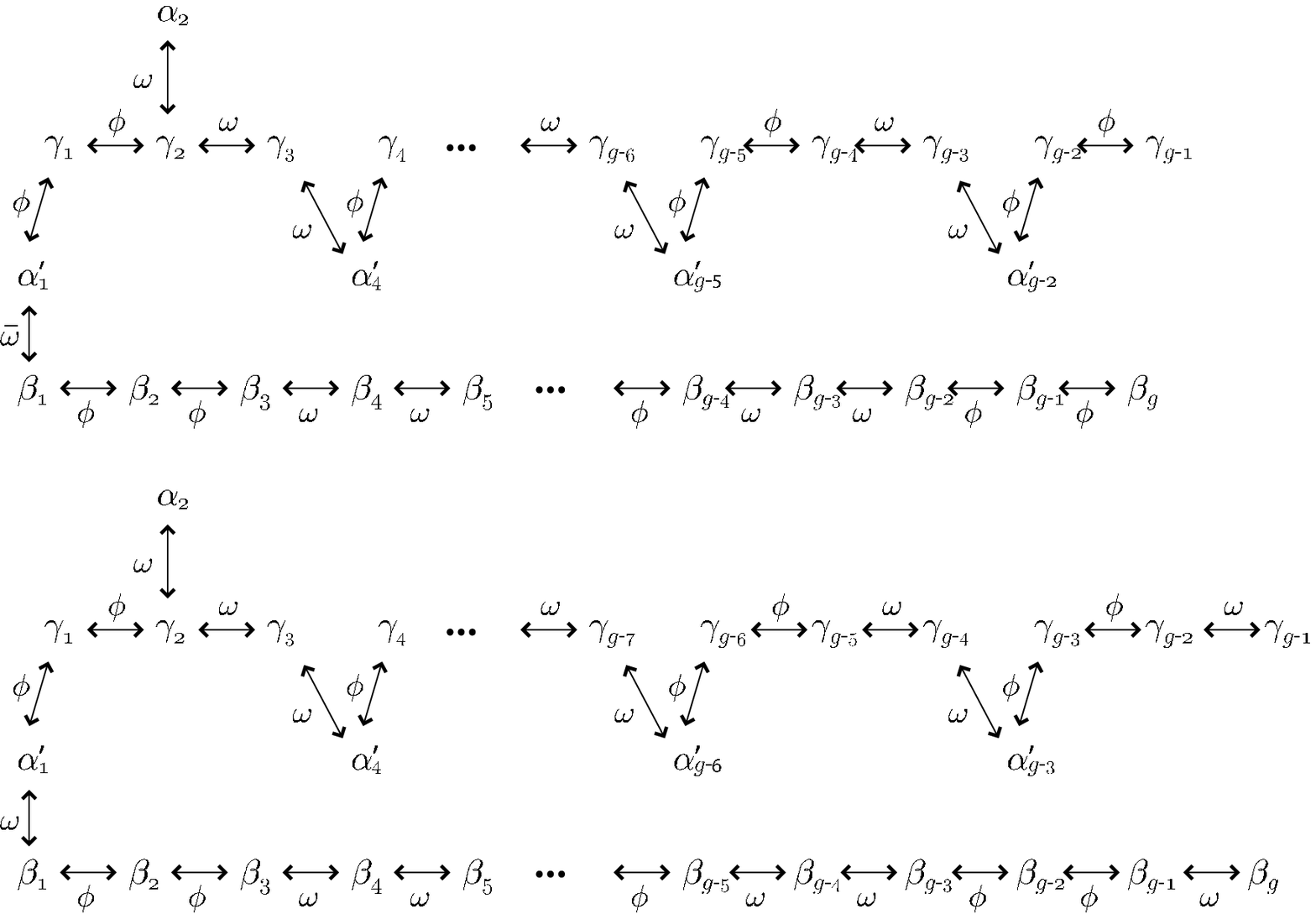}
 \end{center}
 \caption{}
 \label{fig31}
\end{figure}
If $g=3m+2$, we need $\psi$ other than $\phi$, $\omega$ like Figure~\ref{fig32}.
\begin{figure}[htbp]
 \begin{center}
  \includegraphics*[width=12cm]{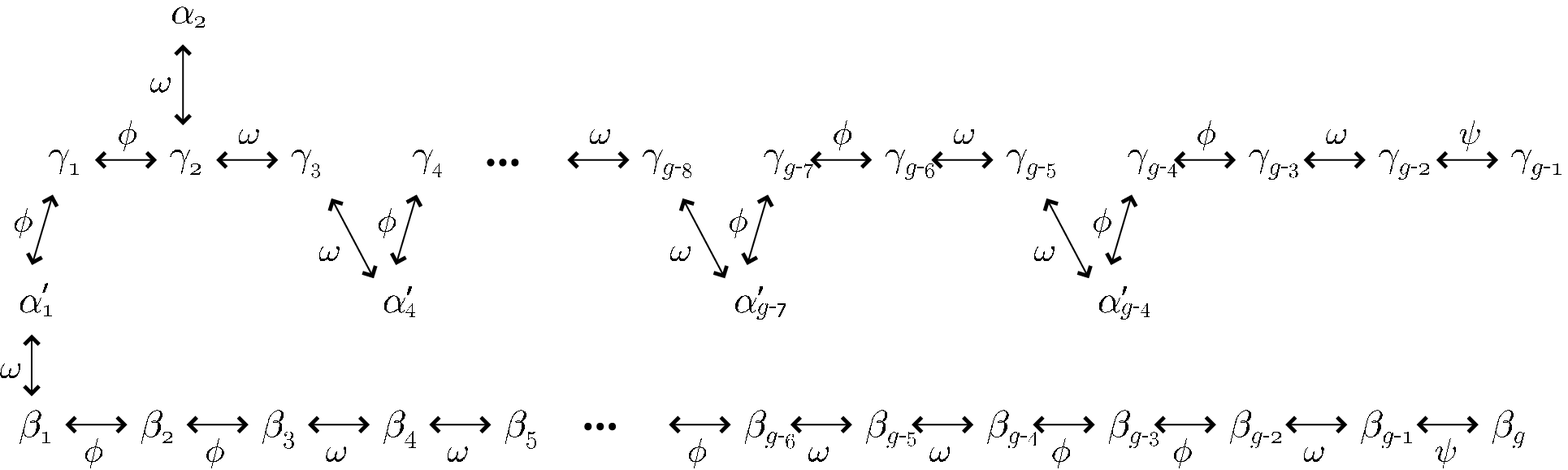}
 \end{center}
 \caption{}
 \label{fig32}
\end{figure}
Therefore, for all $i$, $T_{\beta_{i}}$ $T_{\gamma_{i}}$, are in $G_{2}$.
Because $\omega(\alpha_{2})=\gamma_{2}$, we can find that $T_{\alpha_{2}}\in G_{2}$. 
Therefore, all Humphries's generators are in $G_{2}$.

If $g=5$, $T_{\alpha_{3}}$ is in $G_{2}$.
The Figure~\ref{fig33} shows that we can send $\alpha_{3}$ to all Humphries's generators by $\phi$, $\psi$ and $\omega$.
\begin{figure}[htbp]
 \begin{center}
  \includegraphics*[width=4.5cm]{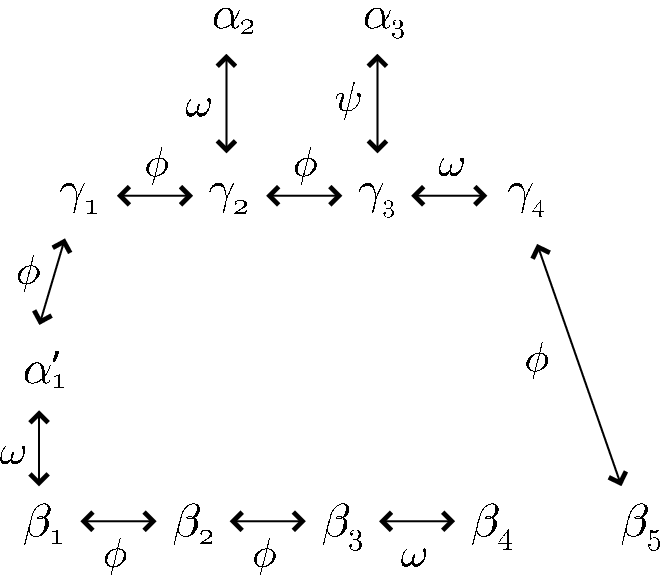}
 \end{center}
 \caption{}
 \label{fig33}
\end{figure}

We prove that $G_{2}$ is equal to ${\cal M}_{g}$ for $g\geq 3$.
\end{proof}

\section{Remarks}
\subsection{Low genus}
In the case of $g=1, 2$, we note that ${\cal M}_{g}$ can not be generated by elements of same order.
By using the argument of MacCarthy and Papadopulos [MP] and the work of Hirose [Hi], we can see the proof.
Hirose listed the Dehn twist presentation of finite order elements for closed oriented surfaces of genera up to 4.
We introduce the presentation of finite order elements in the case of $g=1, 2$.
The list is as follow :
\\
\
\\
\begin{tabular}{c||d{40.0}|d{4.0}}
\hline
 \textbf{genus} & \multicolumn{1}{c|}{\textbf{elements}} & \multicolumn{1}{c}{\textbf{order}} \\
\hline
 1                                     & T_{\beta_{1}}T_{\alpha_{1}}& 6\\
                                       & T_{\alpha_{1}}T_{\beta_{1}}T_{\alpha_{1}} &4\\
\hline
 2                                     & T_{\beta_{2}}T_{\gamma_{1}}T_{\beta_{1}}T_{\alpha_{1}}&  10\\
                                       & T_{\beta_{2}}T_{\beta_{2}}T_{\gamma_{1}}T_{\beta_{1}}T_{\alpha_{1}}&  8\\
                                       & T_{\alpha_{2}}T_{\beta_{2}}T_{\gamma_{1}}T_{\beta_{1}}T_{\alpha_{1}}&  6\\
                                       & (T_{\alpha_{1}}T_{\beta_{1}}T_{\gamma_{1}}T_{\beta_{2}}T_{\alpha_{2}})(T_{\alpha_{2}}T_{\beta_{2}}T_{\gamma_{1}}T_{\beta_{1}}T_{\alpha_{1}})^{3}&  6\\
\hline
\end{tabular}

\

MacCarthy and Papadopulos proved that ${\cal M}_{2}$ can not be generated by elements of order 2.
The argument of MacCarthy and Papadopulos is as follow:
\begin{proof}
Let $c$ be a nonseparating simple closed curve and $p$ be the abelianization map given by Powell's result [Po]:

$$
\begin{array}{ccc}
\textit{p} : {\cal M}_{2}                     & \longrightarrow & \mathbb{Z}_{10} \\[-4pt]
\\
\ \ \ \ \rotatebox{90}{$\in$} &                 & \rotatebox{90}{$\in$} \\[-4pt]
\\
\ \ \ \ \textit{T}_{\textit{c}}                     & \longmapsto     & 1
\end{array}
$$
We can find that
\begin{eqnarray*}
p((T_{\beta_{2}}T_{\gamma_{1}}T_{\beta_{1}}T_{\alpha_{1}})^5)&=&p((T_{\beta_{2}}T_{\beta_{2}}T_{\gamma_{1}}T_{\beta_{1}}T_{\alpha_{1}})^{4})\\
&=&p(\{(T_{\alpha_{1}}T_{\beta_{1}}T_{\gamma_{1}}T_{\beta_{2}}T_{\alpha_{2}})(T_{\alpha_{2}}T_{\beta_{2}}T_{\gamma_{1}}T_{\beta_{1}}T_{\alpha_{1}})^{3}\}^{3})\\
&=&0\\
p((T_{\alpha_{2}}T_{\beta_{2}}T_{\gamma_{1}}T_{\beta_{1}}T_{\alpha_{1}})^{3})&=&5.
\end{eqnarray*}
It is easily to see that $\mathbb{Z}_{10}$ can not be generated by 0 and 5.
Therefore we can see that ${\cal M}_{2}$ can not be generated by elements of order 2.
\end{proof}

By the similar proof, we can see that ${\cal M}_{1}$ and ${\cal M}_{2}$ can not be generated by elements of same order.
\begin{remark}
${\cal M}_{1}$ and ${\cal M}_{2}$ can be generated by elements of different order.
For example, ${\cal M}_{1}$ can be generated by $ T_{\beta_{1}}T_{\alpha_{1}}$ and $T_{\alpha_{1}}T_{\beta_{1}}T_{\alpha_{1}}$, and ${\cal M}_{2}$ can be generated by $T_{\beta_{2}}T_{\gamma_{1}}T_{\beta_{1}}T_{\alpha_{1}}$ and $T_{\alpha_{2}}T_{\beta_{2}}T_{\gamma_{1}}T_{\beta_{1}}T_{\alpha_{1}}$.
\end{remark}

\subsection{Lower bound}
The order of ${\cal M}_{g}$ is not finite.
Therefore, we can easily find that lower bound of the number of generators whose order are 3 (resp. 4) is 2.
The author has the following question :
\begin{question}
What is the minimal number of elements of order 3 (resp. 4) required to generate ${\cal M}_{g}$?
\end{question}

Department of Mathematics, Graduate School of Science, Osaka University, Toyonaka, Osaka 560-0043, Japan\\
\textit{E-mail adress:} \bf{n-monden@cr.math.sci.osaka-u.ac.jp}
\end{document}